\title{On Strong Stability of Explicit Runge--Kutta Methods for Nonlinear
       Semibounded Operators}
\author{Hendrik Ranocha}
\date{21 May 2019}
\numberwithin{equation}{section}
\declaretheoremstyle[
  bodyfont=\normalfont\itshape,
  headformat=\NAME\ \NUMBER\NOTE,
]{myplain}
\declaretheoremstyle[
  headformat=\NAME\ \NUMBER\NOTE,
]{mydefinition}
\newcommand{\envqed}{{\lower-0.3ex\hbox{$\triangleleft$}}}
\declaretheorem[style=myplain,numberwithin=section]{theorem}
\declaretheorem[style=myplain,numberwithin=section]{lemma}
\declaretheorem[style=mydefinition,numberwithin=section,qed=\envqed]{definition}
\declaretheorem[style=mydefinition,numberwithin=section,qed=\envqed]{remark}
\pgfplotsset{compat=1.13}
\newcommand\mynewtag[2]{#1\def\@currentlabel{#1}\label{#2}}
\renewcommand{\vec}[1]{\underline{#1}}
\NewDocumentCommand{\mat}{mo}{%
  \IfValueTF{#2}{%
    \underline{\underline{#1}}{#2}
  }{%
    \underline{\underline{#1}}\,
  }%
}
\newcommand{\diag}[1]{\operatorname{diag}\left(#1\right)}
\newcommand{\scp}[2]{\left\langle{#1,\, #2}\right\rangle}
\DeclarePairedDelimiterX\newset[1]\lbrace\rbrace{\setaux #1||\endsetaux}
\def\setaux#1|#2|#3\endsetaux{\if\relax\detokenize{#2}\relax #1 \else #1 \;\delimsize\vert\; #2 \fi}
\renewcommand{\set}[1]{\newset*{#1}}
\newcommand{\vect}[1]{\begin{pmatrix} #1 \end{pmatrix}}
\newcommand{\I}{\operatorname{I}}
\newcommand{\fnum}{f^{\mathrm{num}}}
\newcommand{\vecfnum}{\vec{f}^{\mathrm{num}}}
\newcommand{\Fnum}{F^{\mathrm{num}}}
\newcommand{\vecFnum}{\vec{F}^{\mathrm{num}}}
  \let\rho\varrho
  \let\phi\varphi
  \let\epsilon\varepsilon
\newcommand{\N}{\mathbb{N}}
\newcommand{\R}{\mathbb{R}}
\newcommand{\dt}{\Delta t}
\renewcommand{\H}{\mathcal{H}}
\renewcommand{\L}{\mathcal{L}}
\newcommand{\dtFE}{\dt_{E}}
\renewcommand{\O}{\mathcal{O}}
\renewcommand{\i}{\mathrm{i}}
\newcommand{\unum}{u_\mathrm{num}}
\newcommand{\normLip}[1]{\abs{#1}_{\mathrm{Lip}}}
\newcommand{\sep}{$\cdot\,$}
\begin{document}

\maketitle

\begin{abstract}
  Explicit Runge--Kutta methods are classical and widespread techniques in the numerical
solution of ordinary differential equations (ODEs). Considering partial differential
equations, spatial semidiscretisations can be used to obtain systems of ODEs that are
solved subsequently, resulting in fully discrete schemes. However, certain stability
investigations of high-order methods for hyperbolic conservation laws are often conducted
only for the semidiscrete versions.
Here, strong stability (also known as monotonicity) of explicit Runge--Kutta methods
for ODEs with nonlinear and semibounded (also known as dissipative) operators is
investigated. Contrary to the linear case, it is proven that many strong stability
preserving (SSP) schemes of order two or greater are not strongly stable for general
smooth and semibounded nonlinear operators. Additionally, it is shown that there
are first order accurate explicit SSP Runge--Kutta methods that are strongly stable
(monotone) for semibounded (dissipative) and Lipschitz continuous operators.

  \medskip\noindent\textbf{Keywords.}
    Runge--Kutta methods \sep
    strong stability \sep
    monotonicity \sep
    strong stability preserving \sep
    semibounded \sep
    dissipative

  \medskip\noindent\textbf{Mathematics Subject Classification (2010).}
    65L06 \sep
    65L20 \sep
    65M12 \sep
    65M20
\end{abstract}

\section{Introduction}
\label{sec:introduction}

Considering the numerical solution of (partial) differential equations, stability
of the schemes plays an important role. For linear symmetric hyperbolic partial
differential equations (PDEs), energy estimates can often be obtained, resulting
in both uniqueness of solutions and existence via appropriate approximations,
as described in the monographs of \citet{kreiss2004initial, gustafsson2013time}.
Using schemes in the framework
of summation by parts operators \citep{kreiss1974finite} with simultaneous
approximation terms \citep{carpenter1994time, carpenter1999stable}, these energy
estimates can often be transferred to semidiscrete schemes, as described in the
review articles of \citet{svard2014review, fernandez2014review} and references cited
therein. While this framework has been developed originally for finite difference
schemes, it contains also many other classes of methods such as finite volume
\citep{nordstrom2001finite, nordstrom2003finite}, discontinuous Galerkin
\citep{gassner2013skew, fernandez2014generalized}, and flux reconstruction schemes
\citep{huynh2007flux, ranocha2016summation}.

Since numerical methods are used to obtain fully discrete schemes from semidiscrete
ones, the preservation of such kind of semidiscrete stability is worth investigating.
Strong stability preserving (SSP) methods can be written as convex combinations of
explicit Euler steps. Hence, they preserve all convex stability properties of the
explicit Euler method, as described in the monograph of \citet{gottlieb2011strong}
and references cited therein.

However, even for linear ODEs with semibounded operators, the explicit Euler method
does not preserve $L^2$ stability in general. Thus, the SSP property cannot be used
to obtain strong stability for these schemes. Nevertheless, some well-known
high order, explicit SSP Runge--Kutta methods are strongly stable in this general
case \citep{tadmor2002semidiscrete, ranocha2018L2stability}. While the classical
fourth order Runge--Kutta method is not strongly stable after one time step, combining
two consecutive time steps results in a strongly stable scheme for this class of
problems \citep{sun2017stability}. Further results for linear autonomous systems
have been obtained by \citet{sun2018strong}.

Because many hyperbolic conservation laws are nonlinear or semidiscretisations
are obtained via nonlinear processes, it is interesting whether explicit SSP
Runge--Kutta methods can be strongly stable for general nonlinear ODEs with
semibounded operators. In the context of hyperbolic conservation laws, many
studies are based on the seminal work of \citet{tadmor1987numerical, tadmor2003entropy}
concerning entropy stability of semidiscretisations. While there
are related studies of explicit Runge--Kutta methods \citep{lozano2018entropy},
there are no general results on strong stability.

Although strong stability can be considered for general convex functionals, the
$L^2$ norm will be used in this article. It is most similar to the linear case and
of interest in applications, e.g. in the recent article of \citet{nordstrom2018energy}.
Moreover, it is a
special case of strong stability and it might be expected that there is a larger
set of methods that are strongly stable for this convex functional, similar to
the case of SSP methods studied by \citet{higueras2005monotonicity}. Furthermore,
the focus is on explicit schemes, since they are widespread, can be implemented
easily, are often more efficient if accuracy is a determining factor, and efficient
use of parallelism is less expensive than for implicit schemes \citep{kopriva2013assessment}.

In this article, it is proven that many explicit SSP Runge--Kutta methods of order
two or greater cannot be strongly stable for nonlinear ODEs with smooth and semibounded
operators in general. Some tedious calculations used in these proofs are
verified using Mathematica~\citep{mathematica10} and published online
\citep{ranocha2018strongRepository}. Moreover, it is shown that first order accurate
schemes can be both SSP and strongly stable for semibounded and Lipschitz continuous
operators.

To do so, the article is structured as follows. At first, basic definitions such
as strong stability and semiboundedness are given in section~\ref{sec:rk-methods}.
Additionally, a brief review of Runge--Kutta methods is included to introduce the
notation. Afterwards, Runge--Kutta methods of up to three stages are studied in
section~\ref{sec:three-stages}. It is shown that there are no such schemes with
order of accuracy of at least two that are strongly stable and SSP. This result
is based on the explicit construction of ODEs with nonlinear and semibounded
operators and implications of the order conditions.

Since the number of parameters and the complexity of the order conditions increases
with the number of stages, the general approach is not really feasible for methods
with more stages. Therefore, some known explicit SSP methods with more than three
stages are investigated separately in
section~\ref{sec:ssprks2-ssprks3}. In particular, it is shown that the families
of schemes with optimal SSP coefficient of order two \cite[Theorem~9.3]{kraaijevanger1991contractivity} and three \cite[Theorem~3]{ketcheson2008highly}
and the ten stage, fourth order method SSPRK(10,4) of \citet{ketcheson2008highly}
are not strongly stable in general.

Thereafter, two well-known and popular SSP methods are studied in more detail
in the following sections.
While the investigations up to this point are only concerned with strong stability
and not with boundedness in general, the popular three-stage method SSPRK(3,3) of
\citet{shu1988efficient} is studied in detail in section~\ref{sec:ssprk33}.
It is shown that the norm of the numerical approximation can increase monotonically
and without bounds for nonlinear and semibounded operators.
Since this article is motivated by applications of SSP methods to semidiscretisations
of hyperbolic conservation laws, an energy stable and nonlinear semidiscretisation
of the linear transport equation is constructed in section~\ref{sec:ssprk104}.
This ODE with semibounded operator is solved numerically with SSPRK(10,4) and it
is shown that the norm of the numerical solution increases for a large range of
time steps.

Turning to first order schemes in section~\ref{sec:first-order}, it is shown that
the limitations of high order schemes studied before do not apply. In particular,
there are explicit SSP Runge--Kutta methods of first order of accuracy that are
strongly stable for semibounded and Lipschitz continuous operators with Lipschitz
constant $L$ under a time step restriction $\dt \leq \dt_\mathrm{max}$, where
$\dt_\mathrm{max} \propto L^{-1}$.
Finally, the results are summarised and discussed in section~\ref{sec:summary}.

\section{Brief Review of Runge Kutta Methods}
\label{sec:rk-methods}

Consider an ordinary differential equation
\begin{equation}
\label{eq:ode}
\begin{aligned}
  \od{}{t} u(t) &= g\bigl( u(t) \bigr),
  && t \in (0,T),
  \\
  u(0) &= u_0,
\end{aligned}
\end{equation}
in a real vector space $\H$ with semi inner product $\scp{\cdot}{\cdot}$, inducing
the seminorm $\norm{\cdot}$. Typically, $\H$ can be a Hilbert space. Therefore,
$\norm{\cdot}$ will be called norm in the following. However, the property distinguishing
a norm from a seminorm will not be used anywhere.

\subsection{Strong Stability}

For a smooth solution of \eqref{eq:ode}, the time derivative of the squared norm is
\begin{equation}
  \od{}{t} \norm{u(t)}^2
  =
  2 \scp{u(t)}{\od{}{t} u(t)}
  =
  2 \scp{u(t)}{g\bigl(u(t)\bigr)}.
\end{equation}

\begin{definition}
  A function $g\colon \H \to \H$ is \emph{semibounded}, if
  \begin{equation}
    \forall u \in \H\colon
    \quad
    \scp{u}{g(u)} \leq 0.
    \qedhere
  \end{equation}
\end{definition}
\begin{remark}
  If a complex (semi) inner product space is considered instead of a real one,
  the real part of the (semi) inner product $\scp{u}{g(u)}$ has to be non-positive.
\end{remark}
\begin{remark}
  Sometimes, such operators $g$ are also called (energy) \emph{dissipative}. Here,
  the term \emph{semibounded} is used instead, in order to emphasise that (energy)
  conservative operators are included in this definition.
\end{remark}

Thus, the (squared) norm of a smooth solution $u$ of \eqref{eq:ode} is bounded by
its initial value if $g$ is semibounded. However, an approximate solution obtained
by a numerical method does not necessarily satisfy this inequality. For example,
applying one step of the explicit Euler method to \eqref{eq:ode} yields the new
value $u_+ = u_0 + \dt g(u_0)$, satisfying
\begin{equation}
  \norm{u_+}^2
  =
  \norm{u_0 + \dt g(u_0)}^2
  =
  \norm{u_0}^2
  + \underbrace{2 \dt \scp{u_0}{g(u_0)}}_{\leq 0}
  + \underbrace{\dt^2 \norm{g(u_0)}^2}_{\geq 0}.
\end{equation}
Thus, for a general semibounded $g$, the norm of the numerical solution can increase
during one time step, e.g. if $\scp{u_0}{g(u_0)} = 0$. In particular, this happens
if $g(u) = \L u$ where $\L$ is a linear and skew-symmetric operator.

\begin{definition}
  A numerical scheme approximating \eqref{eq:ode} during one time step from
  $u_0 \approx u(t)$ to $u_+ \approx u(t + \dt)$ with semibounded $g$ is called
  \emph{strongly stable} if $\norm{u_+}^2 \leq \norm{u_0}^2$.
\end{definition}
\begin{remark}
  Since this work is motivated by discretisations of PDEs, the term strong stability
  is used. In the literature on Runge--Kutta methods, such a property is often
  called \emph{monotonicity}.
\end{remark}

Nevertheless, the explicit Euler method can be strongly stable under stronger
assumptions on $g$. For example, consider the condition
\begin{equation}
\label{eq:g-nonlinear-coercive}
  \exists M \in \R \, \forall u \in \H\colon
  \quad
  \scp{u}{g(u)} \leq M \norm{g(u)}^2.
\end{equation}
If $M < 0$ in \eqref{eq:g-nonlinear-coercive}, the explicit Euler method
$u_+ = u_0 + \dt g(u_0)$ is strongly stable under the time step restriction
$\dt \in (0, -2 M]$, since
\begin{equation}
  \norm{u_0 + \dt g(u_0)}^2 - \norm{u_0}^2
  =
  2 \dt \scp{u_0}{g(u_0)} + \dt^2 \norm{g(u_0)}^2
  \leq
  (\dt + 2 M) \dt \norm{g(u_0)}^2
  \leq
  0.
\end{equation}
Such right hand sides with linear $g$ are called \emph{coercive} by
\citet{levy1998semidiscrete, tadmor2002semidiscrete}.

\begin{remark}
\label{rem:convex-functionals}
  Instead of the norm $\norm{\cdot}$ of the solution, other convex functionals
  can be considered. For semidiscretisations of hyperbolic conservation laws,
  some important examples are the $L^1$ norm $\norm{u(t)}_1 = \int \abs{u(t,x)} \dif x$,
  the total variation seminorm $\norm{u(t)}_{TV}$, non-negativity (expressed via
  $-\min_x u(t,x)$), or the total entropy $\int U\bigl(u(t,x) \bigr) \dif x$,
  where $U$ is a convex function.
  In that case, strong stability refers to the monotonicity of that particular
  convex functional in time.
\end{remark}

Since the explicit Euler method is relatively simple, it is desirable to transfer
results such as strong stability from that method (which are relatively easy to
check) to high order schemes (for which it is considerably more difficult to check
these properties). Strong stability preserving numerical schemes are designed to
enable exactly this transfer, as described in the monograph of \citet{gottlieb2011strong}
and references cited therein.
\begin{definition}
  A numerical time scheme for \eqref{eq:ode} is called \emph{strong
  stability preserving} (SSP) with SSP coefficient $c > 0$ if it is strongly
  stable under the time step restriction $\dt \leq c \dtFE$ whenever the explicit
  Euler method is strongly stable for $\dt \leq \dtFE$ and any convex functional,
  i.e. if for all convex functionals $\eta$,
  $\forall \dt \in (0,\dt_E]\colon \eta(u_0 + \dt \, g(u_0)) \leq \eta(u_0)$
  implies
  $\forall \dt \in (0,c \dt_E]\colon \eta(u_+) \leq \eta(u_0)$.
\end{definition}
Considering only semi inner products as in this article, some restrictions for
general SSP methods can be relaxed \citep{higueras2005monotonicity}.

\subsection{Connections to Other Properties}

The application of a one-sided Lipschitz condition
\begin{equation}
\label{eq:f-logarithmic-Lipschitz}
  \exists M \in \R\, \forall t \in [0,T], u,v \in X\colon
  \quad
  \scp{f(t,u) - f(t,v)}{u-v} \leq M \norm{u-v}^2
\end{equation}
has been very successful, e.g. for stiff and dissipative problems
\begin{equation}
\label{eq:ode-f}
\begin{aligned}
  \od{}{t} u(t) &= f\bigl( t, u(t) \bigr),
  && t \in (0,T),
  \\
  u(0) &= u_0,
\end{aligned}
\end{equation}
cf. \citet{soderlind2006logarithmic}. Indeed, the existence of such a one-sided/logarithmic
Lipschitz constant $M$ implies that the difference between two solutions $u,v$
of \eqref{eq:ode-f} with initial conditions $u_0,v_0$ remains bounded. In particular,
if $M \leq 0$, the difference between two solutions does not increase, resulting
in \emph{contractivity}. Thus, such a condition yields some important stability/boundedness/robustness
properties. Since \eqref{eq:f-logarithmic-Lipschitz} does not restrict the Lipschitz
seminorm
\begin{equation}
  \normLip{f} := \sup_{u \neq v} \frac{\norm{f(t,u) - f(t,v)}}{\norm{u-v}}
\end{equation}
of $f$, results based thereon can be applied to arbitrarily stiff equations.
Hence, it is mostly useful for implicit methods.
In order to be able to investigate also stability properties of explicit methods,
\citet{dahlquist1979generalized} introduced the condition
\begin{equation}
\label{eq:f-contractive}
  \exists M \in \R\, \forall t \in [0,T], u,v \in X\colon
  \quad
  \scp{f(t,u) - f(t,v)}{u-v} \leq M \norm{f(t,u) - f(t,v)}^2,
\end{equation}
see also \citet[Chapter~6]{dekker1984stability}. If $f$ satisfies \eqref{eq:f-contractive}
with $M < 0$, $f$ is Lipschitz continuous in its second argument with
$\normLip{f} \leq -M^{-1}$, since
\begin{equation}
  -M \norm{f(t,u) - f(t,v)}^2
  \leq
  - \scp{f(t,u) - f(t,v)}{u-v}
  \leq
  \norm{f(t,u) - f(t,v)} \, \norm{u - v}.
\end{equation}
$M < 0$ yields again a contractive ODE and results based on \eqref{eq:f-contractive}
can be applied to explicit methods since the Lipschitz constant of $f$ is bounded.

General results on contractivity can be implied by seemingly simpler requirements
such as
\begin{equation}
\label{eq:f-logarithmic-monotone}
  \exists M \in \R \, \forall t \in [0,T], \tilde u \in \tilde X\colon
  \quad
  \scp{\tilde f(t, \tilde u)}{\tilde u} \leq M \norm{\tilde u}^2
\end{equation}
or
\begin{equation}
\label{eq:f-monotone}
  \exists M \in \R \, \forall t \in [0,T], \tilde u \in \tilde X\colon
  \quad
  \scp{\tilde f(t, \tilde u)}{\tilde u} \leq M \norm{\tilde f(t, \tilde u)}^2,
\end{equation}
cf. \cite{burrage1980nonlinear} or \cite[Section~357]{butcher2016numerical}.
In particular, monotonicity/semiboundedness results such as discrete versions of
$\norm{u(t)} \leq \norm{u_0}$ can be transferred directly to contractivity.
Therefore, only the former will be studied in this article.

\begin{remark}
  For linear ODEs with possibly time dependent coefficients, the concepts of
  contractivity and monotonicity are equivalent. Since many results have been
  established for contractivity, e.g. by \citet{dahlquist1979generalized,
  dekker1984stability}, they can be transferred
  directly to monotonicity. In particular, severe limitations of numerical methods
  result from linear ODEs with varying coefficients.
\end{remark}

\begin{remark}
  Results for right hand sides $f$ satisfying \eqref{eq:f-monotone} with $M < 0$
  have been obtained by \citet{higueras2005monotonicity}, similar to the results
  for circle contractivity by \citet{dahlquist1979generalized, dekker1984stability}.
  This is a special case of strong stability preservation and is more directly
  related to semibounded operators considered in this article. Nevertheless,
  since numerical methods for hyperbolic conservation laws motivate this study,
  general SSP methods will be considered.
\end{remark}

\begin{remark}
  For linear and time-independent ODEs \eqref{eq:ode} with semibounded $g$, some
  strong stability properties have been obtained by \citet{ranocha2018L2stability,
  sun2018strong}.
  Thus, it is interesting whether similar results can be established under the
  assumption \eqref{eq:f-monotone} with $M \leq 0$.
  In order to restrict the stiffness of the ODE \eqref{eq:ode}, a Lipschitz
  condition will be assumed, i.e. $\normLip{g} \leq L$. Since there are many
  negative results even for autonomous problems, \eqref{eq:ode} will be considered
  instead of \eqref{eq:ode-f}.
\end{remark}

\subsection{Runge--Kutta Methods}

A general (explicit or implicit) Runge--Kutta method with $s$ stages can be described
by its Butcher tableau \citep{hairer2008solving, butcher2016numerical}
\begin{equation}
\label{eq:butcher}
\begin{array}{c | c}
  c & A
  \\ \hline
    & b
\end{array}
\end{equation}
where $A \in \R^{s \times s}$ and $b, c \in \R^s$. Since \eqref{eq:ode} is an
autonomous ODE, there is no explicit dependency on time and one step from $u_0$
to $u_+$ is given by
\begin{equation}
\label{eq:RK-step}
  u_i
  =
  u_0 + \dt \sum_{j=1}^{s} a_{ij} \, g(u_j),
  \qquad
  u_+
  =
  u_0 + \dt \sum_{i=1}^{s} b_{i} \, g(u_i).
\end{equation}
Here, $u_i$ are the stage values of the Runge--Kutta method. It is also
possible to express the method via the slopes $k_i = g(u_i)$
\citep[Definition II.1.1]{hairer2006geometric}.

Using the stage values $u_i$ as in \eqref{eq:RK-step},
the change of squared norm (``energy'') is given by
\begin{align*}
\stepcounter{equation}\tag{\theequation}
\label{eq:estimate-RK}
  &
  \norm{ u_+ }^2 - \norm{ u_0 }^2
  =
  2 \dt \scp{ u_0 }{ \sum_{i=1}^{s} b_{i} \, g(u_i) }
  + (\dt)^2 \norm{ \sum_{i=1}^{s} b_{i} \, g(u_i) }^2
  \\\stackrel{\eqref{eq:RK-step}}{=}&\,
  2 \dt \sum_{i=1}^{s} b_{i}
    \scp{ u_i - \dt \sum_{j=1}^{s} a_{ij} \, g(u_j) }{
    g(u_i) }
  + (\dt)^2 \norm{ \sum_{i=1}^{s} b_{i} \, g(u_i) }^2
  \\=&\,
  2 \dt \sum_{i=1}^{s} b_{i} \scp{ u_i }{ g(u_i) }
  + (\dt)^2 \left[
    \norm{ \sum_{i=1}^{s} b_{i} \, g(u_i) }^2
    - 2 \sum_{i,j=1}^{s} b_{i} \, a_{ij}
      \scp{ g(u_i) }{ g(u_j) }
  \right]
  \\=&\,
  2 \dt \sum_{i=1}^{s} b_{i} \scp{ u_i }{ g(u_i) }
  + (\dt)^2 \left[
    \sum_{i,j=1}^{s} \left( b_i b_j - b_{i} \, a_{ij} - b_j a_{ji} \right)
      \scp{ g(u_i) }{ g(u_j) }
  \right],
\end{align*}
where the symmetry of the scalar product has been used in the last step.
Here, the first term on the right hand side is consistent with
$\int_{t_0}^{t_0 + \dt} 2 \scp{u(t)}{g\bigl(u(t)\bigr)} \dif t$,
if the Runge--Kutta method is consistent, i.e. $\sum_{i=1}^{s} b_i = 1$.
Additionally, it can be estimated via the semiboundedness of $g$ if all $b_i$
are non-negative.

The second term of order $(\dt)^2$ is undesired. Depending on the method
(and the stages, of course), it may be positive or negative. However, if it is
positive, then a stability error may be introduced.

As a special case, if the method fulfils
$b_i b_j = b_i a_{ij} + b_j a_{ji},\, i,j \in \set{1, \dots, s}$,
this term vanishes. These methods can conserve quadratic invariants of ordinary
differential equations, a topic of geometric numerical integration, see
Theorem IV.2.2 of \citet{hairer2006geometric}, originally proved by
\citet{cooper1987stability}. A special kind of these methods are the implicit
Gauß methods \citep[Section II.1.3]{hairer2006geometric}.

More generally, the $(\dt)^2$ term is non-positive if the matrix with entries
$(b_i b_j - b_{i} \, a_{ij} - b_j a_{ji})_{i,j}$ is negative semidefinite (and
$b_i \geq 0$, as before), i.e. when the Runge--Kutta method is \emph{algebraically
stable}. Then, the Runge--Kutta method is strongly stable in the $L^2$ norm for
every time step $\dt > 0$, i.e. $B$ stable, cf. \cite[section~357]{butcher2016numerical}.
While there are Runge--Kutta methods with these nice stability properties, these
are all implicit.

\begin{remark}
  Applying explicit methods to \eqref{eq:ode}, it can be expected that time step
  restrictions for strong stability depend on boundedness or Lipschitz constants
  of $g$, e.g. $\dt \leq \dt_\mathrm{max} \propto L^{-1}$. Hence, such restrictions
  on $g$ will be used in the following.
\end{remark}

The following result will be used in the next sections, cf.
\cite[Observation~5.2]{gottlieb2011strong} or \cite{kraaijevanger1991contractivity}.
\begin{lemma}
\label{lem:A-b-geq-0}
  Any Runge--Kutta method with positive SSP coefficient $c > 0$ has non-negative
  coefficients and weights, i.e. $\forall i,j\colon a_{ij} \geq 0, b_i \geq 0$.
\end{lemma}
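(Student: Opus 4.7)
The plan is to probe the Runge--Kutta method with specifically designed simple right-hand sides $g$ for which (i) the forward Euler step provably decreases every convex functional, so the SSP hypothesis is satisfied with a quantifiable $\dtFE$, and (ii) the stage and update formulas reduce to transparent linear expressions that isolate the sign of each individual coefficient. This is the classical Kraaijevanger--Ferracina--Spijker strategy and I would carry it out in two steps.

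For non-negativity of the weights $b_i$: I would work in $\H = \R^{s}$ and construct a Lipschitz right-hand side $g$ that is affine on a neighbourhood of the relevant stage values and realises prescribed slopes $g(u_i) = v_i$ for a set of linearly independent vectors $v_1,\dots,v_s$ chosen to point into a closed half-space $K = \set{w : \ell(w)\leq 0}$ that also contains $u_0$. By construction, the forward Euler step maps $K$ into itself for sufficiently small $\dt$, so the convex functional $\eta(w) = \max\bigl(0,\ell(w)\bigr)$ is non-increasing under forward Euler and the SSP hypothesis applies. The Runge--Kutta update then takes the transparent form $u_+ - u_0 = \dt \sum_{i=1}^s b_i v_i$, which exposes each $b_k$ as the coefficient of the direction $v_k$. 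If some $b_k$ were negative, then by a limiting argument as $\dt \to 0$ (and by tuning the geometry so that $v_k$ has a component on which $\ell$ is strictly positive) the update $u_+$ would leave $K$, and hence $\eta(u_+) > \eta(u_0)$, contradicting the SSP property.

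For non-negativity of the internal coefficients $a_{ij}$: an analogous argument works stage by stage, using that each stage value $u_i = u_0 + \dt \sum_j a_{ij} g(u_j)$ is itself a Runge--Kutta-like sub-update with weights $a_{ij}$, and that the SSP hypothesis propagates to the intermediate stages via the Shu--Osher reformulation. Equivalently, one can quote the Shu--Osher representation as in Observation~5.2 of \citet{gottlieb2011strong}, which characterises SSP methods with coefficient $c>0$ as exactly those admitting a convex-combination decomposition of every stage into forward Euler substeps; non-negativity of all $a_{ij}$ and $b_i$ is then an immediate structural consequence.

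The main obstacle is the construction of $g$ in the first step: one must simultaneously arrange that $g$ is Lipschitz and semibounded in the sense needed to make forward Euler monotone in \emph{every} convex functional on $K$, and that the prescribed slope pattern $g(u_i)=v_i$ is actually realised at the stages generated by the method. The standard way around this is to let $g$ be an affine map (for instance a projection onto $K$ relative to a reference point chosen judiciously) and to adjust the reference so that the required slope pattern emerges self-consistently from the Runge--Kutta recursion; a straightforward continuity and scaling argument in $\dt$ then closes the proof.
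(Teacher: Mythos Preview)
The paper does not actually prove this lemma: it simply states the result and cites \cite[Observation~5.2]{gottlieb2011strong} and \cite{kraaijevanger1991contractivity}. Your second route---quoting the Shu--Osher characterisation from Observation~5.2 of \citet{gottlieb2011strong}---is therefore exactly what the paper does, so on that count you match the paper precisely.

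Your first route, the direct geometric probe, goes beyond the paper but is not a complete argument as written, and you essentially acknowledge this in your ``main obstacle'' paragraph. The difficulty is real: to invoke the SSP hypothesis you need forward Euler to be monotone for \emph{every} convex functional, which forces $g$ to satisfy a condition like \eqref{eq:g-nonlinear-coercive} with $M<0$ (or more generally to contract toward a convex set), and an affine $g$ with arbitrarily prescribed slopes at the stage points will not do this in general. Moreover, the stage values $u_i$ depend on $g$ through the Runge--Kutta recursion, so you cannot independently prescribe both the locations $u_i$ and the slopes $g(u_i)=v_i$; the self-consistency you gesture at is exactly the nontrivial step in Kraaijevanger's original argument, and your sketch does not supply it. The standard proof instead goes through the characterisation of the SSP coefficient as the radius of absolute monotonicity $R(A,b)$ and shows directly that $R(A,b)>0$ forces $A\geq 0$, $b\geq 0$; this avoids the need to construct an explicit $g$.
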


That the coefficients $a_{ij}, b_i$ of the schemes are non-negative can also
be obtained under other conditions focusing on circle contractivity, cf.
\cite{higueras2005monotonicity}. This implies certain restrictions on the possible
order of the schemes, cf. \cite{kraaijevanger1991contractivity} or
\cite[Section~5.1]{gottlieb2011strong}.

\section{Explicit Methods with Three Stages}
\label{sec:three-stages}

In this section, explicit Runge--Kutta methods with three stages are considered.
Thus, the corresponding coefficients are
\begin{equation}
\label{eq:rk33}
  a_{21}, \;
  a_{31}, \;
  a_{32}, \;
  b_1, \;
  b_2, \;
  b_3.
\end{equation}
Since the proofs of the negative results obtained in this section are easier
if fewer coefficients are considered, third order methods will be investigated
at first. Thereafter, schemes of at least second order of accuracy are studied.

The usual conditions for second order accurate Runge--Kutta methods used
later are $\sum_{j=1}^s b_j = 1$ and $\sum_{j,k=1}^s b_j a_{jk} = \nicefrac{1}{2}$.
Third order schemes have to fulfil $\sum_{j,k,l=1}^s b_j a_{jk} a_{jl} = \nicefrac{1}{3}$
and $\sum_{j,k,l=1}^s b_j a_{jk} a_{kl} = \nicefrac{1}{6}$ additionally
\cite[Section~II.2]{hairer2008solving}.

The basic approach to get negative results can be described as follows. Certain
test problems \eqref{eq:ode} using specific semibounded $g$ are constructed such that
the norm of the numerical solution increases during the first time step for each
$\dt \in (0, \dt_\mathrm{max}]$. Then, this result can be transferred to semibounded
$g$ with $\normLip{g} \leq L$ by considering suitable modifications outside
of a bounded region using Kirszbraun's theorem \cite[Theorem~1.31]{schwartz1969nonlinear}:
\begin{theorem}[Kirszbraun]
  Suppose $S$ is a subset of the Hilbert space $\H$ and $g\colon S \to H$ is Lipschitz
  continuous. Then, $g$ can be extended to all of $\H$ in such a way that the extension
  satisfies the same Lipschitz condition.
\end{theorem}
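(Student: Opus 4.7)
The plan is to reduce to extending $g$ one point at a time via Zorn's lemma, reduce that single-point extension to a finite intersection property for closed balls, and settle the finite intersection property by a minimisation argument that exploits the Hilbert-space parallelogram identity.

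First, let $L = \normLip{g}$ and consider the poset of Lipschitz extensions $(S', g')$ with $S \subseteq S' \subseteq \H$ and $\normLip{g'} \leq L$, ordered by extension. Every chain has its union as an upper bound, so Zorn yields a maximal element $(S^*, g^*)$. I claim $S^* = \H$: otherwise, pick $x \in \H \setminus S^*$ and try to find $y \in \H$ with $\norm{y - g^*(s)} \leq L \norm{x - s}$ for every $s \in S^*$, which extends $g^*$ to $S^* \cup \set{x}$ and contradicts maximality. Such a $y$ exists precisely when the intersection $\bigcap_{s \in S^*} \overline{B}\bigl(g^*(s),\, L \norm{x - s}\bigr)$ is nonempty.

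Each such ball is closed, convex, and bounded, hence weakly compact in $\H$. It therefore suffices to establish the \emph{finite} intersection property: given $s_1, \dots, s_n \in S^*$ with $y_i = g^*(s_i)$, the balls $\overline{B}(y_i, L \norm{x - s_i})$ share a common point. I would minimise
\begin{equation*}
  \phi(y) = \max_{1 \leq i \leq n} \bigl( \norm{y - y_i}^2 - L^2 \norm{x - s_i}^2 \bigr),
\end{equation*}
which is continuous, convex and coercive, and show that its minimum is $\leq 0$. Letting $y^*$ be a minimiser, assume for contradiction $\phi(y^*) > 0$. The subdifferential optimality condition for a maximum of smooth convex functions forces $y^* = \sum_{i \in I} \lambda_i y_i$ as a convex combination over the active set $I = \set{i : \phi(y^*) = \norm{y^* - y_i}^2 - L^2 \norm{x - s_i}^2}$. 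Combining the centroid identity $\sum_i \lambda_i \norm{y^* - y_i}^2 = \tfrac{1}{2} \sum_{i,j} \lambda_i \lambda_j \norm{y_i - y_j}^2$ (valid because $y^*$ is the weighted centroid of the $y_i$) with the Lipschitz bound $\norm{y_i - y_j} \leq L \norm{s_i - s_j}$, the analogous identity for the $s_i$ centred at $\bar s = \sum_i \lambda_i s_i$, and the relation $\sum_i \lambda_i \norm{x - s_i}^2 = \norm{x - \bar s}^2 + \sum_i \lambda_i \norm{\bar s - s_i}^2$, one obtains $\sum_i \lambda_i \norm{y^* - y_i}^2 \leq L^2 \sum_i \lambda_i \norm{x - s_i}^2$. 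Since each summand on the left (for $i \in I$) equals $L^2 \norm{x - s_i}^2 + \phi(y^*)$ and the $\lambda_i$ sum to $1$, this contradicts $\phi(y^*) > 0$.

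The main obstacle, and the only step where the Hilbert structure is essential, is precisely the chain of identities above: the parallelogram identity is used twice, once in the range and once in the domain, to transfer a pairwise Lipschitz bound into a bound involving the single external point $x$. This is where the argument fails in general Banach spaces, which is why Kirszbraun's theorem is Hilbert-specific; the remaining ingredients (Zorn's lemma and weak compactness of closed balls) are routine, and verifying the subdifferential characterisation of the minimiser is standard convex analysis.
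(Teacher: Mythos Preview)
The paper does not prove Kirszbraun's theorem; it merely quotes it as a known result from \cite[Theorem~1.31]{schwartz1969nonlinear} and then invokes it as a black box in the proofs of Lemma~\ref{lem:erk33-2} and Theorem~\ref{thm:ERK-SSP-3-2}. There is therefore no ``paper's own proof'' to compare against.

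Your argument is the standard proof of Kirszbraun's theorem and is correct. The Zorn reduction to a single-point extension, the reformulation as a ball-intersection problem, the passage to the finite intersection property via weak compactness of closed bounded convex sets (valid because Hilbert spaces are reflexive), and the minimisation argument for the finite case are all sound. In particular, the chain
\[
  \sum_{i\in I} \lambda_i \norm{y^*-y_i}^2
  = \tfrac12\sum_{i,j}\lambda_i\lambda_j\norm{y_i-y_j}^2
  \le \tfrac{L^2}{2}\sum_{i,j}\lambda_i\lambda_j\norm{s_i-s_j}^2
  = L^2\sum_{i\in I}\lambda_i\norm{\bar s - s_i}^2
  \le L^2\sum_{i\in I}\lambda_i\norm{x - s_i}^2
\]
is exactly right, and combined with $\norm{y^*-y_i}^2 = L^2\norm{x-s_i}^2 + \phi(y^*)$ for the active indices it forces $\phi(y^*)\le 0$. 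Two small points worth making explicit in a polished write-up: the case $S^*=\emptyset$ (trivial), and the fact that for the compactness step one only needs a \emph{single} ball to serve as the weakly compact ambient set, the remaining balls being weakly closed because they are convex and norm-closed. Your identification of the parallelogram identity as the Hilbert-specific ingredient is also accurate.
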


\subsection{Third Order Methods}

\begin{theorem}
\label{thm:ERK-SSP-3-3}
  There is no explicit Runge--Kutta method that
  \begin{itemize}
    \item is strong stability preserving with positive SSP coefficient,
    \item is of third order of accuracy \& has at most three stages,
    \item and is strongly stable for \eqref{eq:ode} for all smooth and semibounded
          $g$ with $\normLip{g} \leq L$.
  \end{itemize}
\end{theorem}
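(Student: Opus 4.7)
The plan is to use the energy identity \eqref{eq:estimate-RK} and to construct, for every admissible method, a smooth, semibounded, nonlinear $g$ with $\normLip{g} \leq L$ and an initial datum $u_0$ such that the squared norm strictly increases over the first step for every $\dt \in (0, \dt_\mathrm{max}]$. By Lemma~\ref{lem:A-b-geq-0} the positive SSP coefficient forces $a_{ij}, b_i \geq 0$, so semiboundedness renders the term linear in $\dt$ of \eqref{eq:estimate-RK} non-positive. The task therefore reduces to producing a strictly positive $(\dt)^2$ contribution from
$\sum_{i,j=1}^{3} (b_i b_j - b_i a_{ij} - b_j a_{ji}) \scp{g(u_i)}{g(u_j)}$
that dominates for small $\dt$.

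The key device is a genuinely \emph{nonlinear} conservative test field chosen so that $\scp{u}{g(u)} = 0$ everywhere; this kills the $O(\dt)$ term exactly, and, in contrast to the linear skew-symmetric case already handled for popular SSP schemes by \citet{tadmor2002semidiscrete, ranocha2018L2stability}, the stage values are no longer trapped in a single two-dimensional invariant subspace. A natural family of candidates on $\R^2$ is $g(u) = \phi(\norm{u}^2)\, J u$ with $J$ skew-symmetric and $\phi$ a smooth positive scalar, or a polynomial ``swirl'' field in three dimensions tuned so that the triple $(g(u_1), g(u_2), g(u_3))$ can be prescribed. Using the four third-order conditions
$\sum_i b_i = 1$, $\sum_i b_i c_i = \nicefrac{1}{2}$, $\sum_i b_i c_i^2 = \nicefrac{1}{3}$, $\sum_{i,j} b_i a_{ij} c_j = \nicefrac{1}{6}$
with $c_i = \sum_j a_{ij}$, together with non-negativity of $a_{ij}, b_i$, one identifies a direction in $\H \times \H \times \H$ along which the quadratic form $Q_{ij} = b_i b_j - b_i a_{ij} - b_j a_{ji}$ is strictly positive, uniformly over the admissible two-parameter family of tableaux. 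A suitable choice of $u_0$ and $\phi$ aligns $(g(u_1), g(u_2), g(u_3))$ with this direction at leading order, yielding $\norm{u_+}^2 - \norm{u_0}^2 = C (\dt)^2 + O((\dt)^3)$ with $C > 0$ depending only on the tableau and the test ODE.

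Finally, the constructed $g$ is a priori only locally Lipschitz. One modifies it outside a ball containing $u_0, u_1, u_2, u_3$ by smooth truncation and then invokes the Kirszbraun theorem quoted above to obtain a semibounded, globally Lipschitz extension with $\normLip{g} \leq L$ that leaves the three stages and their images untouched; the counterexample survives. The main obstacle, and the likely reason for the Mathematica verification announced in the introduction, is establishing uniform positivity of the $(\dt)^2$ coefficient across the full admissible family: the interplay between the four order conditions and the positivity constraints on $a_{ij}, b_i$ is delicate, so every three-stage third-order SSP tableau---not merely the distinguished SSPRK(3,3) of \citet{shu1988efficient}---must be ruled out simultaneously rather than one at a time.
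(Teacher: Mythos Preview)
There is a genuine gap in your plan. You aim to produce $\norm{u_+}^2 - \norm{u_0}^2 = C (\dt)^2 + O((\dt)^3)$ with $C>0$ by aligning the triple $(g(u_1),g(u_2),g(u_3))$ with a positive direction of the quadratic form $Q_{ij}=b_ib_j-b_ia_{ij}-b_ja_{ji}$. But the stage vectors are not free: as $\dt\to 0$ one has $u_i\to u_0$ and hence $g(u_i)\to g(u_0)$, so at leading order the triple lies on the diagonal of $\H^3$. On that diagonal,
\[
  \sum_{i,j} Q_{ij}
  = \Bigl(\sum_i b_i\Bigr)^2 - 2\sum_i b_i c_i
  = 1 - 2\cdot\tfrac12 = 0
\]
by the second-order conditions alone. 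Thus the putative $(\dt)^2$ coefficient vanishes identically for \emph{every} method of order $\geq 2$, and no choice of $u_0$ or $\phi$ can make it positive. In fact, for a conservative test field and a third-order method the local error is $O(\dt^4)$, so $\norm{u_+}^2-\norm{u_0}^2$ cannot have a nonzero term before $\dt^4$; the analysis must be carried out at that order, which your proposal does not address.

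The paper's proof proceeds precisely at the $\dt^4$ level. It fixes the concrete one-parameter conservative family $g(u)=\alpha(u_1-r u_2)(-u_2,u_1)^T$, computes (Lemma~\ref{lem:erk33-1}) the $\dt^4$ coefficient as $\tfrac{\alpha^4}{12}\bigl(-5+7r^2+(a_{31}+a_{32})(4-8r^2)\bigr)$, and varies $r$ to extract the necessary bounds $\tfrac78\le a_{31}+a_{32}\le\tfrac54$ (Lemma~\ref{lem:erk33-2}). These are then checked against Ralston's explicit parametrisation of all three-stage third-order methods together with the SSP sign constraints of Lemma~\ref{lem:A-b-geq-0}, and every case is eliminated. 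A smaller point: Kirszbraun's theorem gives a Lipschitz extension but says nothing about semiboundedness; the extension you need is rather a smooth cutoff $\chi(u)g(u)$, which preserves $\scp{u}{g(u)}=0$ trivially and yields a globally Lipschitz, compactly supported $g$.
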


To prove Theorem~\ref{thm:ERK-SSP-3-3}, the initial value problem \eqref{eq:ode} with
\begin{equation}
\label{eq:ode-u1-r-u2}
\begin{aligned}
  u(t) = \vect{u_1(t) \\ u_2(t)},
  \quad
  g(u) = \alpha (u_1 - r u_2) \vect{-u_2 \\ u_1},
  \quad
  u_0 = \vect{1 \\ 0},
\end{aligned}
\end{equation}
will be used, where $r$ is a real parameter, $\alpha > 0$, and $u_1,u_2$ are
real valued functions.
Since $g$ is given by polynomials in $u$, the squared norm after one step can be
calculated explicitly and is a polynomial in the time step $\dt$.
\begin{lemma}
\label{lem:erk33-1}
  Applying an explicit third order Runge--Kutta method with three stages given by
  the parameters \eqref{eq:rk33} to the ODE \eqref{eq:ode} with \eqref{eq:ode-u1-r-u2}
  yields $\norm{u_+}^2 - \norm{u_0}^2 = \dt^4 p(\dt)$, where $p(\dt)$ is a polynomial
  of the form
  \begin{equation}
    p(\dt)
    =
    \frac{\alpha^4}{12} \left(
      - 5 + 7 r^2 + (a_{31} + a_{32}) (4 - 8 r^2)
    \right)
    + \O(\dt).
  \end{equation}
\end{lemma}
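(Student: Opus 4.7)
The strategy is a direct Taylor expansion in $\dt$, with a crucial simplification coming from a conservation property of the test problem that localises the leading term. The key preliminary observation is the algebraic identity
\[
  \scp{u}{g(u)} = \alpha(u_1 - r u_2)(-u_1 u_2 + u_2 u_1) = 0,
\]
so $g$ is not merely semibounded but in fact energy-conservative for \eqref{eq:ode-u1-r-u2}, and the exact solution satisfies $\norm{u(t)} = \norm{u_0} = 1$ for all $t \geq 0$. Combined with the local truncation error $u_+ - u(\dt) = \O(\dt^4)$ of any third-order Runge--Kutta method and the boundedness of $u(\dt)$, this yields
\[
  \norm{u_+}^2 - \norm{u_0}^2 = \scp{u_+ - u(\dt)}{u_+ + u(\dt)} = \O(\dt^4),
\]
so the factorisation $\norm{u_+}^2 - \norm{u_0}^2 = \dt^4 p(\dt)$ with some polynomial $p$ in $\dt$ is automatic, independently of which specific third-order method is used.

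To determine $p(0)$, I would compute the stages as explicit polynomials in $\dt$, using that $g$ is quadratic in $u$: $u_1 = u_0$, $u_2 = u_0 + \dt\, a_{21}\, g(u_1)$, and $u_3 = u_0 + \dt\, a_{31}\, g(u_1) + \dt\, a_{32}\, g(u_2)$, after which $u_+ = u_0 + \dt \sum_{i=1}^{3} b_i\, g(u_i)$ is a polynomial in $\dt$ of degree at most $7$. Expanding $\norm{u_+}^2 - \norm{u_0}^2$ symbolically and imposing the third-order conditions $\sum_i b_i = 1$, $\sum_i b_i c_i = \tfrac{1}{2}$, $\sum_i b_i c_i^2 = \tfrac{1}{3}$, and $\sum_{i,j} b_i a_{ij} c_j = \tfrac{1}{6}$ (with $c_1 = 0$, $c_2 = a_{21}$, $c_3 = a_{31} + a_{32}$) kills all contributions of order $\dt$ through $\dt^3$, as the a priori bound already predicted.

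What remains is to read off the $\dt^4$ coefficient. A short calculation shows that after eliminating, say, $b_1, b_2, b_3$ and $a_{32}$ via the four order conditions, the dependence on the remaining free parameters collapses so that at this order only $c_3 = a_{31} + a_{32}$ and $r$ appear, giving the asserted expression $\tfrac{\alpha^4}{12}\bigl(-5 + 7 r^2 + (a_{31} + a_{32})(4 - 8 r^2)\bigr)$. The main obstacle is the symbolic bookkeeping: even after the a priori cancellation, the raw $\dt^4$ polynomial is bulky, so this step is most cleanly handled by computer algebra, matching the paper's reliance on Mathematica for such manipulations. The higher-degree terms in $\dt$ are absorbed into the $\O(\dt)$ remainder of $p(\dt)$ and do not need to be analysed further for this lemma.
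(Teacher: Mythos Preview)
Your proposal is correct and, at its core, follows the same route as the paper: the paper's entire justification of this lemma is the single sentence ``Lemma~\ref{lem:erk33-1} can be proved by direct but tedious calculations and has been verified using Mathematica,'' i.e.\ a raw symbolic expansion. Your argument adds a genuine conceptual improvement the paper does not spell out: the observation $\scp{u}{g(u)} = 0$ together with the local truncation error $u_+ - u(\dt) = \O(\dt^4)$ explains \emph{a priori} why the $\dt$, $\dt^2$, $\dt^3$ coefficients vanish and hence why the factorisation $\dt^4 p(\dt)$ holds, whereas the paper simply lets the computer algebra verify this cancellation along with everything else. The remaining step---extracting the $\dt^4$ coefficient and showing it depends only on $c_3 = a_{31}+a_{32}$---is handled identically in both approaches, by symbolic computation.
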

Lemma~\ref{lem:erk33-1} can be proved by direct but tedious calculations and has
been verified using Mathematica~\citep{mathematica10}.

\begin{lemma}
\label{lem:erk33-2}
  An explicit third order Runge--Kutta method with three stages given by the
  parameters \eqref{eq:rk33} that is strongly stable for \eqref{eq:ode} for
  all smooth and semibounded $g$ with $\normLip{g} \leq L$ satisfies
  \begin{equation}
    \frac{7}{8} \leq a_{31} + a_{32} \leq \frac{5}{4}.
  \end{equation}
\end{lemma}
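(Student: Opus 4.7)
\textbf{Proof plan for Lemma \ref{lem:erk33-2}.}
The strategy is to combine Lemma~\ref{lem:erk33-1} with the freedom to vary the parameter $r$ in the test problem~\eqref{eq:ode-u1-r-u2}. First I would check that $g$ in~\eqref{eq:ode-u1-r-u2} is semibounded for every $r\in\R$; indeed a direct computation gives $\scp{u}{g(u)} = \alpha(u_1 - r u_2)(-u_1 u_2 + u_2 u_1) = 0$, so $g$ is (trivially) semibounded. Next I would handle the Lipschitz bound $\normLip{g}\leq L$: since $g$ is a polynomial, it is Lipschitz on any bounded set, so restricting $g$ to a ball around $u_0$ that contains all Runge--Kutta stage values for the time steps of interest and extending to the whole space by Kirszbraun's theorem yields an admissible right hand side. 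Because rescaling $g\mapsto\lambda g$ is equivalent to rescaling $\dt\mapsto\lambda\dt$, by taking $\alpha$ small enough we can satisfy $\normLip{g}\leq L$ while still probing arbitrarily small effective time steps.

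Given a method that is strongly stable for all such $g$, Lemma~\ref{lem:erk33-1} says that for each fixed $r$,
\begin{equation*}
  \norm{u_+}^2 - \norm{u_0}^2
  = \dt^4 p(\dt)
  = \dt^4\Bigl(\tfrac{\alpha^4}{12}\bigl(-5 + 7r^2 + (a_{31}+a_{32})(4-8r^2)\bigr) + \O(\dt)\Bigr).
\end{equation*}
Strong stability on any interval $(0,\dt_\mathrm{max}]$ forces $p(\dt)\leq 0$ for all sufficiently small positive $\dt$, so the constant term of $p$ must be non-positive. Setting $s := a_{31}+a_{32}$, this gives
\begin{equation*}
  -5 + 4s + r^2(7 - 8s) \leq 0 \qquad \text{for every } r\in\R.
\end{equation*}

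Finally I would extract the two bounds by choosing $r$ appropriately. Taking $r=0$ yields $-5+4s\leq 0$, i.e.\ $s\leq \tfrac{5}{4}$. For the lower bound, if $s < \tfrac{7}{8}$ then $7-8s>0$, and letting $|r|\to\infty$ makes the left-hand side tend to $+\infty$, a contradiction; hence $s\geq \tfrac{7}{8}$. Combining the two inequalities gives the claimed range $\tfrac{7}{8}\leq a_{31}+a_{32}\leq \tfrac{5}{4}$.

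The main obstacle is the bookkeeping around the Lipschitz assumption: one must ensure that varying $r$ over all of $\R$ is compatible with the global bound $\normLip{g}\leq L$ for a \emph{single} fixed $L$. This is resolved by the scaling argument above (absorbing $r$-dependent Lipschitz constants into $\alpha$ and compensating by shrinking $\dt$), together with Kirszbraun's theorem to make the modified $g$ globally defined; the core quantitative content then reduces to the elementary quadratic-in-$r$ analysis of the leading coefficient of $p$.
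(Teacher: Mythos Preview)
Your proposal is correct and follows essentially the same approach as the paper: both use Lemma~\ref{lem:erk33-1}, force the leading coefficient of $p$ to be non-positive for every $r$, extract the upper bound at $r=0$ and the lower bound by letting $|r|\to\infty$, and handle the Lipschitz constraint by scaling $\alpha$ and invoking Kirszbraun's theorem. Your presentation of the inequality as a quadratic in $r$ is slightly more direct than the paper's reformulation in terms of $\frac{5-7r^2}{4-8r^2}$, but the argument is the same.
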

\begin{proof}
  In order to be strongly stable for the ODE \eqref{eq:ode} with \eqref{eq:ode-u1-r-u2},
  the coefficient of the constant term of the polynomial $p(\dt)$ given in
  Lemma~\ref{lem:erk33-1} has to be non-positive, i.e.
  \begin{equation*}
    \frac{12}{\alpha^4} \, p(0) =  - 5 + 7 r^2 + (a_{31} + a_{32}) (4 - 8 r^2) \leq 0.
  \end{equation*}
  This can be reformulated as
  \begin{equation}
  \begin{aligned}
    a_{31} + a_{32} &\leq \frac{5 - 7 r^2}{4 - 8 r^2},
      \quad&\text{if } r^2 &< \frac{1}{2}
    \\
    a_{31} + a_{32} &\geq \frac{5 - 7 r^2}{4 - 8 r^2},
      \quad&\text{if } r^2 &> \frac{1}{2}.
  \end{aligned}
  \end{equation}
  Basically, the assertion is proved by letting $r \to 0$ in the first
  inequality and $r \to \infty$ in the second one. To satisfy the upper bound on
  the Lipschitz constant, $\alpha \to 0$ can be coupled with the limiting process
  on $r$: In this way, the local Lipschitz constant of $g$ \eqref{eq:ode-u1-r-u2}
  around $u_0$ can be made arbitrarily small without changing the results of
  Lemma~\ref{lem:erk33-2}.
  Since only one time step is considered, $g$ can be modified outside of a suitable
  neighbourhood of $u_0$ while keeping the local Lipschitz constant of $g$ as
  global Lipschitz constant because of Kirszbraun's theorem.
\end{proof}

These technical results can be used to prove Theorem~\ref{thm:ERK-SSP-3-3} as
follows.
\begin{proof}[Proof of Theorem~\ref{thm:ERK-SSP-3-3}]
  The general solution of the order conditions for third order explicit
  Runge--Kutta methods with three stages is given by the two parameter family
  \begin{equation}
  \label{eq:rk33-two-parameters}
  \begin{aligned}
    a_{21} &= \alpha_2,
    &
    b_1 &= 1 + \frac{2 - 3 (\alpha_2 + \alpha_3)}{6 \alpha_2 \alpha_3},
    \\
    a_{31} &= \frac{3 \alpha_2 \alpha_3 (1 - \alpha_2) - \alpha_3^2}
                   {\alpha_2 (2 - 3 \alpha_2)},
    \qquad&
    b_2 &= \frac{3 \alpha_3 - 2}{6 \alpha_2 (\alpha_3 - \alpha_2)},
    \\
    a_{32} &= \frac{\alpha_3 (\alpha_3 - \alpha_2)}
                   {\alpha_2 (2 - 3 \alpha_2)},
    &
    b_3 &= \frac{2 - 3 \alpha_2}{6 \alpha_3 (\alpha_3 - \alpha_2)},
  \end{aligned}
  \end{equation}
  where $\alpha_2, \alpha_3 \neq 0$, $\alpha_2 \neq \alpha_3$, $\alpha_2 \neq 2/3$
  and the two one parameter families
  \begin{equation}
  \label{eq:rk33-one-parameter-1}
  \begin{aligned}
    a_{21} &= \frac{2}{3},
    &
    b_1 &= \frac{1}{4},
    \\
    a_{31} &= \frac{2}{3} - \frac{1}{4 \omega_3},
    \qquad&
    b_2 &= \frac{3}{4} - \omega_3,
    \\
    a_{32} &= \frac{1}{4 \omega_3},
    &
    b_3 &= \omega_3,
  \end{aligned}
  \end{equation}
  where $\alpha_2 = \alpha_3 = 2/3$ and
  \begin{equation}
  \label{eq:rk33-one-parameter-2}
  \begin{aligned}
    a_{21} &= \frac{2}{3},
    &
    b_1 &= \frac{1}{4} - \omega_3,
    \\
    a_{31} &= \frac{1}{4 \omega_3},
    \qquad&
    b_2 &= \frac{3}{4},
    \\
    a_{32} &= -\frac{1}{4 \omega_3},
    &
    b_3 &= \omega_3,
  \end{aligned}
  \end{equation}
  where $\alpha_3 = 0$, cf. \cite{ralston1962runge}. Thus, it suffices to check
  each case separately.

  Clearly, the necessary condition $a_{31} + a_{32} \geq \nicefrac{7}{8}$ of Lemma~\ref{lem:erk33-2}
  is violated for both one parameter families. Thus, it suffices to study the
  two parameter family \eqref{eq:rk33-two-parameters} for all possible cases.

  Due to Lemma~\ref{lem:A-b-geq-0}, all coefficients $a_{ij},b_i$ have to be
  non-negative for an SSP method. In particular, $\alpha_2 = a_{21} \geq 0$.
  Due to Lemma~\ref{lem:erk33-2}, $\alpha_3 = a_{31} + a_{32}$ has to satisfy
  $\nicefrac{7}{8} \leq \alpha_3 \leq \nicefrac{5}{4}$.
  \begin{itemize}
    \item
    $0 < \alpha_2 < \nicefrac{2}{3}$,
    $\nicefrac{7}{8} \leq \alpha_3 \leq \nicefrac{5}{4}$.

    In this case, $2 - 3 \alpha_2 > 0$ and
    \begin{equation}
      a_{31}
      =
      \underbrace{\frac{1}{\alpha_2 (2 - 3 \alpha_2)} \alpha_3}_{>0}
      \bigl(
        \underbrace{3 \alpha_2 (1 - \alpha_2)}_{\leq \nicefrac{3}{4}}
        - \underbrace{\alpha_3}_{\geq \nicefrac{7}{8}}
      \bigr)
      < 0.
    \end{equation}
    Thus, this case is excluded.

    \item
    $\nicefrac{2}{3} < \alpha_2$,
    $\nicefrac{7}{8} \leq \alpha_3 \leq \nicefrac{5}{4}$,
    $\alpha_2 \neq \alpha_3$.

    In this case, $2 - 3 \alpha_2 < 0$. Since
    \begin{equation}
      a_{32}
      =
      \underbrace{\frac{\alpha_3}{\alpha_2 (2 - 3 \alpha_2)}}_{< 0}
      (\alpha_3 - \alpha_2),
    \end{equation}
    the condition $a_{32} \geq 0$ is equivalent to $\alpha_3 < \alpha_2$.
    However, due to
    \begin{equation}
      b_2
      =
      \underbrace{\frac{3 \alpha_3 - 2}{6 \alpha_2}}_{> 0}
      \;
      \frac{1}{(\alpha_3 - \alpha_2)},
    \end{equation}
    $b_2 \geq 0$ requires $\alpha_3 > \alpha_2$, contradicting the requirement
    for $a_{32} \geq 0$, Hence, this case is also excluded.
  \end{itemize}
  This proves Theorem~\ref{thm:ERK-SSP-3-3}.
\end{proof}

\subsection{Schemes of at Least Second Order of Accuracy}

A generalisation of Theorem~\ref{thm:ERK-SSP-3-3} is
\begin{theorem}
\label{thm:ERK-SSP-3-2}
  There is no explicit Runge--Kutta method that
  \begin{itemize}
    \item is strong stability preserving with positive SSP coefficient,
    \item is of at least second order of accuracy \& has at most three stages,
    \item and is strongly stable for \eqref{eq:ode} for all smooth and semibounded
          $g$ with $\normLip{g} \leq L$.
  \end{itemize}
\end{theorem}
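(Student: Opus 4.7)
The plan is to extend the strategy of Theorem~\ref{thm:ERK-SSP-3-3} by adapting Lemmas~\ref{lem:erk33-1} and~\ref{lem:erk33-2} to second-order schemes. I would reuse the same family of test problems~\eqref{eq:ode-u1-r-u2}. Since its $g$ satisfies $\scp{u}{g(u)} \equiv 0$, the exact squared norm is conserved, so for a consistent second-order scheme the discrete energy defect takes the form $\norm{u_+}^2 - \norm{u_0}^2 = \dt^3 q(\dt)$, where $q(\dt)$ is a polynomial in $\dt$ whose constant term $q(0)$ is itself a polynomial in $r$ with coefficients depending on $(a_{21}, a_{31}, a_{32}, b_1, b_2, b_3)$. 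This analogue of Lemma~\ref{lem:erk33-1} is a direct (if tedious) symbolic computation that can again be verified in Mathematica.

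The scaling argument of Lemma~\ref{lem:erk33-2} then transfers unchanged: coupling $r \to 0$ and $r \to \infty$ with $\alpha \to 0$ keeps the local Lipschitz constant of $g$ arbitrarily small, while Kirszbraun's theorem ensures a bounded global extension exists; and the sign of $q(0)$ is preserved under these limits. Strong stability for every smooth semibounded $g$ with $\normLip{g} \leq L$ therefore forces $q(0) \leq 0$ for every $r \in \R$. Reading off the leading and constant coefficients of $q(0)$ as a polynomial in $r$ (and, if needed, examining a finite set of intermediate values of $r$) converts this into a finite list of explicit necessary inequalities in the Butcher coefficients.

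These inequalities are then combined with the consistency condition $\sum_i b_i = 1$, the second-order condition $\sum_{i,j} b_i a_{ij} = \nicefrac{1}{2}$, and the non-negativity $a_{ij}, b_i \geq 0$ guaranteed by Lemma~\ref{lem:A-b-geq-0}. Third-order schemes are already excluded by Theorem~\ref{thm:ERK-SSP-3-3}, so it suffices to rule out strictly second-order methods. Two-stage methods are captured as the degenerate subfamily with $b_3 = a_{31} = a_{32} = 0$, where the constraints reduce drastically and must already contradict non-negativity of the weights combined with the sign condition on $q(0)$. For three-stage schemes I would split into cases analogous to those in the proof of Theorem~\ref{thm:ERK-SSP-3-3}, distinguishing, for example, according to the position of $a_{21}$ relative to $\nicefrac{2}{3}$ and whether $a_{21}$ coincides with $a_{31}+a_{32}$, and in each case exhibit an incompatibility between the derived inequalities and non-negativity of some $a_{ij}$ or $b_i$.

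The main obstacle is the jump from a two-parameter family (third-order) to a four-parameter family (strictly second-order): both the polynomial $q(0)$ and the subsequent case analysis grow substantially more intricate, and the clean dichotomy that gave $\nicefrac{7}{8} \leq a_{31}+a_{32} \leq \nicefrac{5}{4}$ in the third-order setting is unlikely to reappear in such a compact form. In particular, the limits $r \to 0$ and $r \to \infty$ alone may not exhaust the information in $q(0) \leq 0$, so either a more refined use of $q(0)$ at finite $r$, or a second test ODE with a different polynomial structure for $g$, will probably be required to eliminate the remaining degenerate subfamilies.
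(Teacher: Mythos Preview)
Your outline is close to the paper's approach and you correctly anticipate both that the $\dt^3$ term alone will not suffice (for the test problem~\eqref{eq:ode-u1-r-u2}, the coefficient $q(0)$ is \emph{linear} in $r$, so $q(0)\leq 0$ for all $r$ forces a single equality rather than a useful pair of inequalities) and that a second test ODE is needed. The paper indeed introduces the companion problem $g(u)=\alpha(ru_1-u_2)(-u_2,u_1)^T$ and then passes to the $\dt^4$ coefficients of both problems to obtain two families of inequalities in $r$, together with the equality coming from the vanishing $\dt^3$ term.

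The genuine gap in your plan is the expectation that these constraints, combined with the order conditions and $a_{ij},b_i\geq 0$, will directly produce a contradiction via case analysis. They do not. Feeding everything into \texttt{Reduce} leaves exactly one surviving scheme,
\[
a_{21}=\tfrac12,\; a_{31}=0,\; a_{32}=1,\; b_1=\tfrac14,\; b_2=\tfrac12,\; b_3=\tfrac14,
\]
which satisfies all the nonlinear test-problem constraints and the SSP sign conditions. Ruling it out requires a completely different argument that your proposal does not mention: one computes its stability function $R(z)=1+z+\tfrac12 z^2+\tfrac18 z^3$ and checks that $|R(iy)|^2=1+\tfrac{1}{64}y^6>1$ for $y\neq 0$, so the scheme already fails strong stability for linear skew-symmetric $g$. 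Without this last step your case analysis cannot close, no matter how the cases are split.
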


The basic idea of the proof of Theorem~\ref{thm:ERK-SSP-3-2} is the same as for
the proof of Theorem~\ref{thm:ERK-SSP-3-3}. However, the technical details are
a bit more complicated.

As before, the initial value problem \eqref{eq:ode} with \eqref{eq:ode-u1-r-u2}
will be used, where $r$ is a real parameter, $\alpha > 0$, and $u_1,u_2$ are real
valued functions.
\begin{lemma}
\label{lem:erk32-u1-r-u2}
  Applying an explicit three stage Runge--Kutta method with at least second order
  of accuracy given by the parameters \eqref{eq:rk33} to the ODE \eqref{eq:ode}
  with \eqref{eq:ode-u1-r-u2} yields $\norm{u_+}^2 - \norm{u_0}^2 = \dt^3 p(\dt)$,
  where $p(\dt)$ is a polynomial of the form
  \begin{multline}
    p(\dt)
    =
    \left(
      -1 + a_{21} - 2 a_{21} a_{31} b_3 + 2 a_{31}^2 b_3 + 4 a_{31} a_{32} b_3 + 2 a_{32}^2 b_3
    \right) r \alpha^3
    \\
    + \frac{1}{4} \left(
      1 + r^2 - 8 a_{21} a_{32} b_3 (2 - r^2 + a_{31} (-1 + 2 r^2) + a_{32} (-1 + 2 r^2))
    \right) \dt \alpha^4
    + \O(\dt^2).
  \end{multline}
\end{lemma}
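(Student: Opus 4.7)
The plan is to compute $\norm{u_+}^2 - \norm{u_0}^2$ by direct expansion, with the bookkeeping simplified by two structural observations. First, I would verify that $\scp{u}{g(u)} = 0$ identically for the $g$ in \eqref{eq:ode-u1-r-u2}: this is immediate because $g(u)$ is a scalar multiple of $(-u_2, u_1)^T$, which is orthogonal to $u = (u_1, u_2)^T$. In particular the exact flow of \eqref{eq:ode} preserves the norm, so $\norm{u(t)} = \norm{u_0}$ for all $t \geq 0$. Second, since the method has order at least two, the usual local error estimate gives $\norm{u_+ - u(\dt)} = \O(\dt^3)$. Combining the two facts, $\norm{u_+}^2 - \norm{u_0}^2 = \O(\dt^3)$ follows without any computation, which already accounts for the global factor $\dt^3$ appearing in the statement.

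Next, I would compute the coefficients of $\dt^3$ and $\dt^4$ in $\norm{u_+}^2$ explicitly by expanding the Runge--Kutta stages recursively. Starting from $U_1 = u_0 = (1,0)^T$ with $g(U_1) = (0,\alpha)^T$, one obtains $U_2 = (1, \dt a_{21} \alpha)^T$ and then $U_3$ as an explicit polynomial of degree at most three in $\dt$. Substituting into $u_+ = u_0 + \dt \sum_i b_i g(U_i)$ and squaring produces a polynomial in $\dt$ whose coefficients are polynomials in $r, \alpha$ and in the Butcher coefficients. I would then invoke the order conditions $\sum_i b_i = 1$ and $b_2 a_{21} + b_3(a_{31}+a_{32}) = 1/2$, the latter being $\sum_i b_i c_i = 1/2$ written out for an explicit three stage scheme, to eliminate $b_1$ and one further weight. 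Collecting the remaining monomials at orders $\dt^3$ and $\dt^4$ then produces exactly the expression claimed. The automatic vanishing of the $\dt^0$, $\dt^1$, $\dt^2$ coefficients, forced by the argument of the first paragraph, provides a convenient consistency check on the algebra.

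The main obstacle is purely bookkeeping. Even after these simplifications, the $\dt^4$ coefficient of $\norm{u_+}^2$ involves products of up to four evaluations of $g$, each itself a low degree polynomial in $\dt$ with several monomials in $r, \alpha$ and the $a_{ij}$. Done by hand this expansion is extremely error prone, which is precisely why the authors verify the resulting identity symbolically in Mathematica, as noted after Lemma~\ref{lem:erk33-1}. No further structural insight is required beyond the two observations already employed.
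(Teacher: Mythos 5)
Your proposal is correct and follows essentially the same route as the paper, which proves this lemma by direct (tedious) expansion of the stages and verifies the resulting polynomial identity in Mathematica. Your preliminary observation that $\scp{u}{g(u)}=0$ plus second-order accuracy forces $\norm{u_+}^2-\norm{u_0}^2=\O(\dt^3)$ a priori is a nice consistency check not spelled out in the paper, but the substance of the argument — expanding $U_2$, $U_3$, $u_+$ as polynomials in $\dt$ and eliminating $b_1,b_2$ via the two order conditions — is identical.
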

Since the restrictions imposed by \eqref{eq:ode-u1-r-u2} do not seem to suffice
to prove Theorem~\ref{thm:ERK-SSP-3-2}, the initial value problem \eqref{eq:ode} with
\begin{equation}
\label{eq:ode-r-u1-u2}
\begin{aligned}
  u(t) = \vect{u_1(t) \\ u_2(t)},
  \quad
  g(u) = \alpha (r u_1 - u_2) \vect{-u_2 \\ u_1},
  \quad
  u_0 = \vect{1 \\ 0},
\end{aligned}
\end{equation}
will be used additionally, where $r$ is again a real parameter and $\alpha > 0$.
\begin{lemma}
\label{lem:erk32-r-u1-u2}
  Applying an explicit three stage Runge--Kutta method with at least second order
  of accuracy given by the parameters \eqref{eq:rk33} to the ODE \eqref{eq:ode}
  with \eqref{eq:ode-r-u1-u2} yields $\norm{u_+}^2 - \norm{u_0}^2 = \dt^3 p(\dt)$,
  where $p(\dt)$ is a polynomial of the form
  \begin{multline}
    p(\dt)
    =
    \left(
      -1 + a_{21} - 2 a_{21} a_{31} b_3 + 2 a_{31}^2 b_3 + 4 a_{31} a_{32} b_3 + 2 a_{32}^2 b_3
    \right) r^2 \alpha^3
    \\
    + \frac{1}{4} r^2 \left(
      1 + r^2 + 8 a_{21} a_{32} b_3 \bigl( 1 - 2 r^2 + (a_{31} + a_{32}) (-2 + r^2) \bigr)
    \right) \dt \alpha^4
    + \O(\dt^2).
  \end{multline}
\end{lemma}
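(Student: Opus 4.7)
The approach mirrors the pattern that would be used for Lemma~\ref{lem:erk32-u1-r-u2}: since $g$ in \eqref{eq:ode-r-u1-u2} is polynomial (indeed quadratic) in $u$, every stage value $u_i$, every evaluation $g(u_i)$, and hence $\norm{u_+}^2 - \norm{u_0}^2$, are polynomials in $\dt$ which can be expanded explicitly and then simplified using the second-order conditions $\sum_i b_i = 1$ and $\sum_{i,j} b_i a_{ij} = \tfrac{1}{2}$.

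First I would observe that for $g(u) = \alpha (r u_1 - u_2)(-u_2, u_1)^T$ one has $\scp{u}{g(u)} = -\alpha(r u_1 - u_2) u_1 u_2 + \alpha(r u_1 - u_2) u_1 u_2 = 0$ identically, so the first sum in \eqref{eq:estimate-RK} vanishes exactly and
\begin{equation*}
  \norm{u_+}^2 - \norm{u_0}^2
  = \dt^2 \sum_{i,j=1}^{3} (b_i b_j - b_i a_{ij} - b_j a_{ji}) \scp{g(u_i)}{g(u_j)}.
\end{equation*}
Taylor-expanding the stages as $u_i = u_0 + \dt c_i g(u_0) + \O(\dt^2)$ with $c_i = \sum_j a_{ij}$, the $\dt^0$-contribution of each $\scp{g(u_i)}{g(u_j)}$ equals $\norm{g(u_0)}^2 = \alpha^2 r^2$, so the $\dt^2$-coefficient of $\norm{u_+}^2 - \norm{u_0}^2$ reduces to $\alpha^2 r^2 \bigl[(\sum_i b_i)^2 - 2\sum_{i,j} b_i a_{ij}\bigr]$, which vanishes by the order-two conditions. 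This already proves that $\norm{u_+}^2 - \norm{u_0}^2 = \dt^3 p(\dt)$.

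To identify the constant term $p(0)$, I would compute $\scp{g(u_0)}{g'(u_0) g(u_0)} = -\alpha^3 r^2$ directly from the explicit Jacobian of $g$ at $u_0 = (1,0)$, giving
\begin{equation*}
  p(0) = -\alpha^3 r^2 \sum_{i,j=1}^{3} (b_i b_j - b_i a_{ij} - b_j a_{ji})(c_i + c_j),
\end{equation*}
and then substitute $c_1 = 0$, $c_2 = a_{21}$, $c_3 = a_{31} + a_{32}$ and eliminate $b_2 a_{21}$ via $b_2 a_{21} + b_3 (a_{31} + a_{32}) = \tfrac{1}{2}$ to obtain the stated closed form. For the coefficient of $\dt$ in $p$, I would push the stage expansion one order further, $u_i = u_0 + \dt c_i g(u_0) + \dt^2 d_i\, g'(u_0) g(u_0) + \O(\dt^3)$ with $d_i = \sum_j a_{ij} c_j$, and Taylor-expand $g(u_i)$ to second order in $\dt$, producing additional inner products $\scp{g(u_0)}{g'(u_0)^2 g(u_0)}$, $\scp{g'(u_0) g(u_0)}{g'(u_0) g(u_0)}$ and $\scp{g(u_0)}{g''(u_0)\bigl(g(u_0), g(u_0)\bigr)}$; each of these is a short direct calculation at $u_0 = (1,0)$, and the final collapse once more relies on the order-two condition.

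The main obstacle is bookkeeping rather than insight: after expansion one must combine all nine pairs $(i,j)$ weighted by $b_i b_j - b_i a_{ij} - b_j a_{ji}$, producing dozens of monomials in the Runge--Kutta coefficients before the order-two condition compresses them into the stated form. As for Lemma~\ref{lem:erk33-1} and Lemma~\ref{lem:erk32-u1-r-u2}, I would carry out these manipulations in a computer algebra system and verify them against the implementation in the accompanying repository~\citep{ranocha2018strongRepository}.
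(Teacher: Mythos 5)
Your proposal is correct and matches the paper's approach: the paper proves this lemma by "direct but tedious calculations \dots verified using Mathematica," and you propose exactly that direct polynomial expansion (organised via the energy identity \eqref{eq:estimate-RK}, the observation that $\scp{u}{g(u)}\equiv 0$ for this $g$, and the order-two conditions), deferring the final bookkeeping to a computer algebra system. Your hand verification of the constant term, including the elimination $b_2 a_{21} = \tfrac12 - b_3(a_{31}+a_{32})$ that yields the stated closed form, checks out.
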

Both Lemma~\ref{lem:erk32-u1-r-u2} and Lemma~\ref{lem:erk32-r-u1-u2} can be proved
by direct but tedious calculations and have been verified using
Mathematica~\citep{mathematica10}.

These technical results can be used to prove Theorem~\ref{thm:ERK-SSP-3-2} as
follows.
\begin{proof}[Proof of Theorem~\ref{thm:ERK-SSP-3-2}]
  For $s=3$ stages, the conditions for an order of accuracy $2$ are
  \begin{equation*}
  \stepcounter{equation}
  \tag{\theequation a}
  \label{eq:erk32-order}
    \sum_{j=1}^s b_j = 1,
    \quad
    \sum_{j,k=1}^s b_j a_{jk} = \frac{1}{2}.
  \end{equation*}
  As in Lemma~\ref{lem:erk33-2}, the coefficient of the constant term of the
  polynomial $p(\dt)$ in Lemma~\ref{lem:erk32-u1-r-u2} has to be non-positive
  for all $r \in \R$. Since
  $p(0) = (-1 + a_{21} - 2 a_{21} a_{31} b_3 + 2 a_{31}^2 b_3 + 4 a_{31} a_{32} b_3 + 2 a_{32}^2 b_3) r \alpha^3$, this implies
  \begin{equation*}
  \tag{\theequation b}
  \label{eq:erk32-ss-1}
    -1 + a_{21} - 2 a_{21} a_{31} b_3 + 2 a_{31}^2 b_3 + 4 a_{31} a_{32} b_3 + 2 a_{32}^2 b_3
    =
    0.
  \end{equation*}
  Furthermore, Lemma~\ref{lem:erk32-u1-r-u2} (i.e. the right hand side
  \eqref{eq:ode-u1-r-u2}) yields the condition
  \begin{equation*}
  \tag{\theequation c}
  \label{eq:erk32-ss-2}
    \forall r \in \R\colon
    \quad
    1 + r^2 - 8 a_{21} a_{32} b_3 \bigl( 2 - r^2 + (a_{31} + a_{32}) (-1 + 2 r^2) \bigr)
    \leq 0.
  \end{equation*}
  Similarly, Lemma~\ref{lem:erk32-r-u1-u2} (i.e. the right hand side
  \eqref{eq:ode-r-u1-u2}) yields the condition
  \begin{equation*}
  \tag{\theequation d}
  \label{eq:erk32-ss-3}
    \forall r \in \R\colon
    \quad
    r^2 \left(
      1 + r^2 + 8 a_{21} a_{32} b_3 \bigl( 1 - 2 r^2 + (a_{31} + a_{32}) (-2 + r^2) \bigr)
    \right)
    \leq 0.
  \end{equation*}
  As in the proof of Lemma~\ref{lem:erk33-2}, $\alpha > 0$ can be adapted to $r$
  such that the local Lipschitz constant of $g$ near $u_0$ is as small as desired.
  Outside of such a neighbourhood, $g$ can be modified to keep this Lipschitz
  constant (Kirszbraun’s theorem).

  Finally, Lemma~\ref{lem:A-b-geq-0} yields the conditions
  \begin{equation*}
  \tag{\theequation e}
  \label{eq:erk32-A-b-geq-0}
    a_{21} \geq 0, \;
    a_{31} \geq 0, \;
    a_{32} \geq 0, \;
    b_1 \geq 0, \;
    b_2 \geq 0, \;
    b_3 \geq 0.
  \end{equation*}
  Applying the function \texttt{Reduce} of Mathematica~\citep{mathematica10} to
  equations \eqref{eq:erk32-order} to \eqref{eq:erk32-A-b-geq-0} yields the
  single possibility
  \begin{equation}
  \label{eq:erk32-result}
    a_{21} = \frac{1}{2}, \;
    a_{31} = 0, \;
    a_{32} = 1, \;
    b_1 = \frac{1}{4}, \;
    b_2 = \frac{1}{2}, \;
    b_3 = \frac{1}{4}.
  \end{equation}
  This scheme is not strongly stable for the ODE \eqref{eq:ode} with $g(u) = \L u$,
  where $\L$ is a general linear and skew-symmetric operator. This can be seen by
  considering the classical stability region of this scheme. Indeed, the stability
  function is $R(z) = \det(\I - z A + z \mathbbm{1} b^T) = 1 + z + \frac{1}{2} z^2 + \frac{1}{8} z^3$. Considering $z = y \i$ with $y \in \R$ yields
  \begin{equation}
    \abs{ R(y \i) }^2
    =
    \abs{ 1 + y \i - \frac{1}{2} y^2 - \frac{1}{8} y^3 \i }^2
    =
    \left( 1 - \frac{1}{2} y^2 \right)^2
    + \left( y - \frac{1}{8} y^3 \right)^2
    =
    1 + \frac{1}{64} y^6.
  \end{equation}
  Hence, $\abs{ R(y \i) } > 1$ for $y \neq 0$ and the scheme \eqref{eq:erk32-result}
  is not strongly stable in general.
  This proves Theorem~\ref{thm:ERK-SSP-3-2}.
\end{proof}

\section{Some Known Explicit Methods}
\label{sec:ssprks2-ssprks3}

Since the general explicit Runge--Kutta with more than three stages has an increased
number of coefficients, an approach similar to the one in the previous section
is not really feasible. Therefore, some specific methods with up to ten stages will
be studied in this section.

As before, the impossibility results will be obtained using some specifically
designed test problems. In the following, the ODE \eqref{eq:ode} with
\begin{equation}
\label{eq:ode-u1-u2}
\begin{aligned}
  u(t) = \vect{u_1(t) \\ u_2(t)},
  \quad
  g(u) = (u_1 - u_2) \vect{-u_2 \\ u_1},
  \quad
  u_0 = \vect{1 \\ 0},
\end{aligned}
\end{equation}
will be used, i.e. \eqref{eq:ode-u1-r-u2} or \eqref{eq:ode-r-u1-u2} with $r=1$.

\subsection{Second Order Methods}

The unique second order explicit SSP Runge--Kutta method SSPRK(s,2) with $s \geq 2$
stages and optimal (maximal) SSP coefficient is given by the Butcher coefficients
\citep[Theorem~9.3]{kraaijevanger1991contractivity}
\begin{equation}
  a_{i,j} = \frac{1}{s-1},
  \quad
  b_i = \frac{1}{s},
  \qquad
  \forall i,j \in \set{1,\dots,s}, j < i.
\end{equation}
These schemes can be implemented in a low storage form as \citep{ketcheson2008highly}
\begin{equation}
\label{eq:ssprks2}
\begin{aligned}
  u_k &= u_{k-1} + \frac{\dt}{s-1} g(u_{k-1}), \qquad k \in \set{1,\dots,s},
  \\
  u_+ &= \frac{s-1}{s} u_s + \frac{1}{s} u_0.
\end{aligned}
\end{equation}
\begin{theorem}
\label{thm:ssprks2}
  The second order explicit SSP Runge--Kutta methods SSPRK(s,2), $s \geq 2$, of
  \cite[Theorem~9.3]{kraaijevanger1991contractivity} are not strongly stable for
  the ODE \eqref{eq:ode} for all smooth and semibounded $g$ with $\normLip{g} \leq L$.
\end{theorem}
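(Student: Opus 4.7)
The plan is to apply SSPRK$(s,2)$ in its low-storage form \eqref{eq:ssprks2} to the test problem \eqref{eq:ode-u1-u2}, compute $\norm{u_+}^2 - \norm{u_0}^2$ as a polynomial in $\dt$, and show that its leading nontrivial coefficient is strictly positive for every integer $s \geq 2$. A rescaling in $g$ together with Kirszbraun's theorem will then convert this into a counterexample satisfying $\normLip{g} \leq L$ for any prescribed $L > 0$.

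First I would record that the $g$ in \eqref{eq:ode-u1-u2} is semibounded: a direct computation gives $\scp{u}{g(u)} = (u_1-u_2)(-u_1 u_2 + u_1 u_2) = 0$, so $g$ is in fact norm-conserving at the continuous level and $\norm{u(t)}^2 \equiv \norm{u_0}^2$ along exact trajectories. Using the recursion $u_k = u_{k-1} + \frac{\dt}{s-1} g(u_{k-1})$ followed by $u_+ = \frac{s-1}{s} u_s + \frac{1}{s} u_0$, each stage can be expanded as a polynomial in $\dt$ with coefficients built from iterated Fr\'echet derivatives of $g$ at $u_0$; since $g$ is a quadratic vector field, this Taylor expansion of $g$ itself terminates after the Hessian, so only the combinatorics of iterated Euler steps has to be tracked. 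Because SSPRK$(s,2)$ has classical order two and the exact flow preserves the norm, the contributions of order $\dt$ and $\dt^2$ to $\norm{u_+}^2 - \norm{u_0}^2$ must cancel, so
\begin{equation*}
  \norm{u_+}^2 - \norm{u_0}^2 = \dt^3 p_s(\dt)
\end{equation*}
for some polynomial $p_s$ with $p_s(0) = 2\scp{u_0}{\tau_s}$, where $\tau_s$ is the local truncation error vector.

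The main technical task is to evaluate $p_s(0)$ explicitly as a function of $s$ and verify $p_s(0) > 0$ for all $s \geq 2$. With the Jacobian and Hessian of $g$ at $u_0 = (1,0)$ computed by hand, $p_s(0)$ collapses to a rational expression in $s$ whose positivity I would expect to read off from a manifestly positive numerator; following the style of Lemma~\ref{lem:erk33-1} and Lemma~\ref{lem:erk32-u1-r-u2}, this simplification and positivity check is naturally delegated to Mathematica, and is the principal obstacle since the combinatorial complexity of the $s$-fold iteration grows with $s$ and no linearisation is available. Once $p_s(0) > 0$ is established, continuity yields some $\dt_* > 0$ with $\norm{u_+}^2 > \norm{u_0}^2$ for every $\dt \in (0, \dt_*]$.

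Finally, to enforce $\normLip{g} \leq L$ for arbitrary $L > 0$, I would replace $g$ by the scaled field $\alpha g$ with $\alpha > 0$: one step of SSPRK$(s,2)$ of size $\dt$ applied to $\alpha g$ is identical to a step of size $\alpha \dt$ applied to $g$, so the norm-increasing window becomes $\dt \in (0, \dt_*/\alpha]$. Choosing $\alpha$ so that $\alpha g$ has Lipschitz constant $L$ on a bounded neighbourhood $U$ of $u_0$ containing all first-step stage values, and extending $\alpha g|_U$ to the whole of $\H$ via Kirszbraun's theorem combined with a smooth radial cutoff that preserves semiboundedness globally, yields the required $L$-Lipschitz, semibounded operator on $\H$ for which SSPRK$(s,2)$ is not strongly stable, completing the proof.
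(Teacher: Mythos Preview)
Your overall strategy matches the paper's: apply SSPRK$(s,2)$ to the test problem \eqref{eq:ode-u1-u2}, expand $\norm{u_+}^2 - \norm{u_0}^2$ in $\dt$, show the leading nontrivial coefficient is positive, and then rescale and extend via Kirszbraun. The gap is in your identification of the leading order. You commit to $\norm{u_+}^2 - \norm{u_0}^2 = \dt^3 p_s(\dt)$ with $p_s(0) = 2\scp{u_0}{\tau_s} > 0$, but in fact $p_s(0) = 0$ for every $s \geq 2$: computing the two third-order elementary differentials of $g$ at $u_0 = (1,0)$ shows that the $\dt^3$ part of the local error equals $\frac{1}{6(s-1)}(0,1)^T\dt^3$, which is orthogonal to $u_0$. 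Hence $2\scp{u_0}{\tau_s} = \O(\dt^4)$, and your plan stalls precisely at the step you flag as the principal obstacle.

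The paper resolves this by pushing the expansion one order further. Lemma~\ref{lem:ssprks2} gives closed-form expressions, proved by induction on the stage index $k$, for the iterated Euler stages $u_k$ up to $\O(\dt^5)$; inserting $k = s$ into the final averaging step yields
\[
  \norm{u_+}^2 - \norm{u_0}^2 = \frac{s+1}{6(s-1)^2}\,\dt^4 + \O(\dt^5),
\]
whose leading coefficient is manifestly positive for all $s \geq 2$. So the fix is computational rather than conceptual: you must carry the stage recursion to one more order than the heuristic $2\scp{u_0}{\tau_s}$ suggests, and the resulting $\dt^4$ coefficient is no longer a simple inner product of $u_0$ with the leading local error term. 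Your rescaling and Kirszbraun arguments then go through essentially as written.
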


In order to prove Theorem~\ref{thm:ssprks2}, the following technical result will
be used.
\begin{lemma}
\label{lem:ssprks2}
  For the ODE \eqref{eq:ode} with parameters \eqref{eq:ode-u1-u2}, the stages
  $u_k$, $k \in \set{0, \dots, s}$, in \eqref{eq:ssprks2} satisfy
  \begin{equation}
  \begin{aligned}
    u_{k,1}
    &=
    1
    - \frac{k (k-1)}{2} \biggl( \frac{\dt}{s-1} \biggr)^2
    + \frac{k (k-1)^2}{2} \biggl( \frac{\dt}{s-1} \biggr)^3
    - \frac{(k+1) k (k-1) (k-2)}{12} \biggl( \frac{\dt}{s-1} \biggr)^4
    + \O(\dt^5),
    \\
    u_{k,2}
    &=
    k \biggl( \frac{\dt}{s-1} \biggr)
    - \frac{k (k-1)}{2} \biggl( \frac{\dt}{s-1} \biggr)^2
    - \frac{k (k-1) (k-2)}{6} \biggl( \frac{\dt}{s-1} \biggr)^3
    \\&\phantom{=}\qquad
    + \frac{(5k-7) k (k-1) (k-2)}{12} \biggl( \frac{\dt}{s-1} \biggr)^4
    + \O(\dt^5).
  \end{aligned}
  \end{equation}
\end{lemma}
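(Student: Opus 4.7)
The plan is to prove both expansions simultaneously by induction on $k \in \{0,1,\dots,s\}$, using the abbreviation $h := \dt/(s-1)$. With this shorthand, the low-storage recurrence in \eqref{eq:ssprks2} becomes explicit Euler, $u_k = u_{k-1} + h\,g(u_{k-1})$, and the componentwise form of the right-hand side in \eqref{eq:ode-u1-u2} gives
\begin{equation*}
  u_{k,1} = u_{k-1,1} - h\,u_{k-1,2}\,v_{k-1}, \qquad
  u_{k,2} = u_{k-1,2} + h\,u_{k-1,1}\,v_{k-1},
\end{equation*}
where $v_{k-1} := u_{k-1,1} - u_{k-1,2}$. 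The base case $k=0$ is immediate: every $h^j$-coefficient in the claimed expansions carries a factor of $k$, so all of them vanish at $k=0$, leaving $u_{0,1}=1$ and $u_{0,2}=0$ as required.

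For the inductive step, I would substitute the $\O(h^5)$ expansions of $u_{k-1,1}$ and $u_{k-1,2}$ from the inductive hypothesis into the componentwise update, expand the products $u_{k-1,2}\,v_{k-1}$ and $u_{k-1,1}\,v_{k-1}$ modulo $\O(h^4)$ (so that, multiplied by the outer factor $h$, they contribute modulo $\O(h^5)$), and collect coefficients by powers of $h$. The coefficient of $h^j$ in $u_{k,\bullet}$ then equals the coefficient of $h^j$ in $u_{k-1,\bullet}$ plus a known polynomial in $k$ formed by the convolution of lower-order coefficients of the inductive hypothesis.

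What remains is a finite list of polynomial identities in $k$ to verify, one per power of $h$ in each component. The simplest one, for the $h^2$ coefficient of $u_{k,1}$, amounts to $-k(k-1)/2 = -(k-1)(k-2)/2 - (k-1)$, which is trivial; the higher identities — culminating in the coefficient $(5k-7)k(k-1)(k-2)/12$ of $h^4$ in $u_{k,2}$ — are of the same nature but consolidate several lower-order contributions. The main obstacle is pure bookkeeping: tracking which cross terms feed into each higher-order coefficient and truncating consistently so that no $\O(h^5)$-relevant term is dropped. As with Lemmas~\ref{lem:erk33-1}, \ref{lem:erk32-u1-r-u2} and \ref{lem:erk32-r-u1-u2} of the paper, this mechanical verification can be carried out and independently checked with a computer algebra system.
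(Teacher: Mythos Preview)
Your proposal is correct and follows essentially the same approach as the paper: induction on $k$ with the base case $u_0=(1,0)$ and the inductive step obtained by inserting \eqref{eq:ode-u1-u2} into the recurrence \eqref{eq:ssprks2}. The paper's own proof is exactly this induction, stated in one sentence without writing out the bookkeeping you describe.
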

\begin{proof}
  Since $u_0 = (1, 0)$, the result is true for $k=0$. Assuming the result holds
  for $k$, inserting \eqref{eq:ode-u1-u2} into \eqref{eq:ssprks2} proves the
  result for $k+1$ and thus for general $k \in \set{0,\dots,s}$.
\end{proof}

\begin{proof}[Proof of Theorem~\ref{thm:ssprks2}]
  Consider the ODE \eqref{eq:ode} with parameters \eqref{eq:ode-u1-u2}.
  Using Lemma~\ref{lem:ssprks2},
  \begin{equation}
  \begin{aligned}
    \norm{u_+}^2 - \norm{u_0}^2
    &=
    \biggl( \frac{s-1}{s} u_{s,1} + \frac{1}{s} \biggr)^2
    + \biggl( \frac{s-1}{s} u_{s,2} \biggr)^2
    - 1
    \\
    &=
    \biggl(
      1
      - \frac{1}{2} \dt^2
      + \frac{1}{2} \dt^3
      - \frac{(s+1) (s-2)}{12 (s-1)^2} \dt^4
    \biggr)^2
    - 1 + \O(\dt^5)
    \\&\quad
    + \biggl(
      \dt
      - \frac{1}{2} \dt^2
      - \frac{s-2}{6 (s-1)} \dt^3
      + \frac{(5s-7) (s-2)}{12 (s-1)^2} \dt^4
    \biggr)^2
    =
    \frac{s+1}{6 (s-1)^2} \dt^4 + \O(\dt^5).
  \end{aligned}
  \end{equation}
  Since $(s+1)/(6 (s-1)^2) > 0$ for $s \geq 2$, $\norm{u_+}^2 > \norm{u_0}^2$
  for small $\dt > 0$ and Theorem~\ref{thm:ssprks2} is proved by applying the
  same arguments as in the proofs given hitherto to reduce the Lipschitz
  constant as desired.
\end{proof}

\subsection{Third Order Methods}

There is also a family of third order SSP methods with optimal SSP coefficient
and $s = n^2$ stages for $n \in \N$, $n \geq 2$ \citep[Theorem~3]{ketcheson2008highly}.
This family contains the method SSPRK(4,3)
of \citet[Theorem~9.5]{kraaijevanger1991contractivity}. The
schemes of this family can be implemented in low storage form as \citep{ketcheson2008highly}
\begin{equation}
\label{eq:ssprkn23}
\begin{aligned}
  u_k &= u_{k-1} + \frac{\dt}{n (n-1)} g(u_{k-1}), &&k \in \set{1,\dots,\frac{n(n+1)}{2}},
  \\
  v_0 &= \frac{n}{2n-1} u_{\frac{(n-1)(n-2)}{2}} + \frac{n-1}{2n-1} u_{\frac{n(n+1)}{2}},
  \\
  v_k &= v_{k-1} + \frac{\dt}{n (n-1)} g(v_{k-1}), &&k \in \set{1,\dots,\frac{n(n-1)}{2}},
  \\
  u_+ &= v_{\frac{n(n-1)}{2}}.
\end{aligned}
\end{equation}
\begin{theorem}
\label{thm:ssprkn23}
  The third order explicit SSP Runge--Kutta methods SSPRK($n^2$,3), $n \geq 2$,
  of \citep[Theorem~3]{ketcheson2008highly} are not strongly stable for the ODE
  \eqref{eq:ode} for all smooth and semibounded $g$ with $\normLip{g} \leq L$.
\end{theorem}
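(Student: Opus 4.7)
The plan is to mimic the strategy used in the proof of Theorem~\ref{thm:ssprks2}: choose the same test ODE \eqref{eq:ode} with parameters \eqref{eq:ode-u1-u2} and compute $\norm{u_+}^2 - \norm{u_0}^2$ as a Taylor series in $\dt$, showing that its leading nonzero coefficient is strictly positive. The key observation is that this test problem is energy-conserving, since $\scp{u}{g(u)} = -u_1(u_1-u_2)u_2 + u_2(u_1-u_2)u_1 = 0$, so the exact solution satisfies $\norm{u(t)} \equiv \norm{u_0}$ and any positive drift of the squared norm is a purely numerical artefact.

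First, I would establish an analogue of Lemma~\ref{lem:ssprks2} for the first pass of $n(n+1)/2$ explicit Euler steps in \eqref{eq:ssprkn23}. Since the recurrence $u_k = u_{k-1} + (\dt/(n(n-1))) g(u_{k-1})$ has exactly the same form as \eqref{eq:ssprks2}, the same induction on $k$ yields closed-form polynomial expressions for $u_{k,1}$ and $u_{k,2}$ modulo $\O(\dt^5)$, with $\dt/(s-1)$ replaced by $\dt/(n(n-1))$. These expressions can then be substituted into the convex combination
\begin{equation*}
  v_0 = \frac{n}{2n-1} u_{\frac{(n-1)(n-2)}{2}} + \frac{n-1}{2n-1} u_{\frac{n(n+1)}{2}}
\end{equation*}
to obtain the two components of $v_0$ as explicit power series in $\dt$ whose coefficients depend only on $n$.

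Next, I would run an analogous induction through the second pass of $n(n-1)/2$ Euler steps, starting now from $v_0$ rather than from $u_0 = (1,0)$. Because SSPRK($n^2$,3) is third-order accurate and the exact trajectory conserves the norm, the two lowest orders in $\dt$ of $\norm{u_+}^2 - \norm{u_0}^2$ must vanish identically, and the first genuinely nontrivial contribution is the coefficient of $\dt^4$. The central task is to isolate this coefficient as an explicit function of $n$ and verify that it is strictly positive for every $n \geq 2$; once this is done, the Kirszbraun argument used in the proofs of Theorem~\ref{thm:ERK-SSP-3-3} and Theorem~\ref{thm:ssprks2} applies verbatim to rescale and truncate $g$ so that $\normLip{g} \leq L$ for any preassigned $L > 0$, ruling out strong stability under any restriction of the form $\dt \leq \dt_\mathrm{max} \propto L^{-1}$.

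The main obstacle is the algebraic bookkeeping in the second pass: unlike the first pass, the iteration does not start from $(1,0)$ but from a state $v_0$ that is already an $\O(\dt)$ perturbation, so the clean closed form of Lemma~\ref{lem:ssprks2} does not directly recur, and every term $g(v_{k-1})$ must be expanded up to sufficiently high order while keeping $n$ symbolic. I would carry out this calculation with Mathematica, following the pattern of the other tedious verifications in the paper, and then confirm positivity of the resulting rational function of $n$ by factoring out a manifestly positive common denominator and showing that the remaining polynomial in $n$ is positive for all integers $n \geq 2$.
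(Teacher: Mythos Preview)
Your approach is exactly the one the paper takes: the same test problem \eqref{eq:ode-u1-u2}, the same two inductive lemmas (one for the $u_k$'s, one for the $v_k$'s), and the same Kirszbraun rescaling at the end. There is, however, a genuine gap in the plan as stated.

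You propose to work modulo $\O(\dt^5)$ and to show that the coefficient of $\dt^4$ in $\norm{u_+}^2 - \norm{u_0}^2$ is strictly positive for every $n \geq 2$. The paper's computation gives
\[
  \norm{u_+}^2 - \norm{u_0}^2
  =
  \frac{n^2 - n - 2}{12\, n^2 (n-1)^2}\, \dt^4
  + \frac{n^2 - n + 3}{6\, n^2 (n-1)^2}\, \dt^5
  + \O(\dt^6),
\]
and since $n^2 - n - 2 = (n-2)(n+1)$, the $\dt^4$ coefficient \emph{vanishes} at $n = 2$. So your positivity claim fails precisely at the smallest case, SSPRK($4$,$3$). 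To salvage the argument you must carry both inductions one order higher, i.e.\ work modulo $\O(\dt^6)$, and then use the $\dt^5$ coefficient (which is strictly positive for all $n \geq 2$) to handle $n = 2$, while the $\dt^4$ coefficient covers $n \geq 3$. Apart from this extra order of bookkeeping, everything else in your plan matches the paper.
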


In order to prove Theorem~\ref{thm:ssprkn23}, the following technical results will
be used.
\begin{lemma}
\label{lem:ssprkn23-u}
  For the ODE \eqref{eq:ode} with parameters \eqref{eq:ode-u1-u2}, the stages
  $u_k$, $k \in \set{0, \dots, \frac{n(n+1)}{2}}$, in \eqref{eq:ssprkn23} satisfy
  \begin{equation}
  \begin{aligned}
    u_{k,1}
    &=
    1
    - \frac{k (k-1)}{2 n^2 (n-1)^2} \dt^2
    + \frac{k (k-1)^2}{2 n^3 (n-1)^3} \dt^3
    \\&\qquad
    - \frac{(k+1) k (k-1) (k-2)}{12 n^4 (n-1)^4} \dt^4
    - \frac{(3k-7) k (k-1)^2 (k-2)}{12 n^5 (n-1)^5} \dt^5
    + \O(\dt^6),
    \\
    u_{k,2}
    &=
    \frac{k}{n (n-1)} \dt
    - \frac{k (k-1)}{2 n^2 (n-1)^2} \dt^2
    - \frac{k (k-1) (k-2)}{6 n^3 (n-1)^3} \dt^3
    \\&\qquad
    + \frac{(5k-7) k (k-1) (k-2)}{12 n^4 (n-1)^4} \dt^4
    - \frac{(13 k^2 - 41 k + 26) k (k-1) (k-2)}{60 n^5 (n-1)^5} \dt^5
    + \O(\dt^6).
  \end{aligned}
  \end{equation}
\end{lemma}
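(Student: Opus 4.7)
The plan is to mimic the induction argument of Lemma~\ref{lem:ssprks2}, extended by one more order in $\dt$. The first $n(n+1)/2$ updates in \eqref{eq:ssprkn23} are all explicit Euler steps with the same effective step size $h := \dt / (n(n-1))$ applied to the right-hand side from \eqref{eq:ode-u1-u2}, namely $g(u) = (u_1 - u_2)(-u_2, u_1)$. Written componentwise,
\[
  u_{k+1,1} = u_{k,1} - h (u_{k,1} - u_{k,2})\, u_{k,2},
  \qquad
  u_{k+1,2} = u_{k,2} + h (u_{k,1} - u_{k,2})\, u_{k,1}.
\]

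First I would verify the base case $k = 0$: every summand of the claimed expansion beyond the leading constant carries the factor $k(k-1)$, so all of them vanish, yielding $(u_{0,1}, u_{0,2}) = (1, 0)$ as required by \eqref{eq:ode-u1-u2}. For the inductive step, I assume the formula at index $k$ and substitute the truncated expansions into the Euler update. Because $h$ contributes one power of $\dt$, I only need the bilinear products $(u_{k,1} - u_{k,2})\, u_{k,j}$ up to order $\dt^4$ in order to resolve $u_{k+1,j}$ up to order $\dt^5$. Expanding and collecting, the coefficient of each power of $\dt$ becomes a polynomial in $k$, which I then compare to the claimed formula evaluated at $k+1$. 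At order $\dt^2$ the check reduces to the Pascal-type identity $(k+1)k/2 = k(k-1)/2 + k$, and analogous polynomial identities in $k$ handle the orders $\dt^3$, $\dt^4$, and $\dt^5$.

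The only real obstacle is the bookkeeping at orders $\dt^4$ and $\dt^5$: several cross products of the form (order $\dt^a$) times (order $\dt^b$) with $a + b \in \{3, 4\}$ must be combined, producing polynomials in $k$ of degree up to five. No new idea is required beyond direct substitution; verifying each of the six polynomial identities in $k$ is routine but tedious, and I would offload it to Mathematica, exactly as was done for the analogous computations cited earlier in the paper~\citep{ranocha2018strongRepository}.
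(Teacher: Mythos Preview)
Your proposal follows exactly the paper's approach: induction on $k$, with the base case $u_0=(1,0)$ and the inductive step carried out by substituting the expansion into the explicit Euler update and matching coefficients (the paper leaves the algebra implicit, likewise deferring to Mathematica). One small slip in your base-case justification: not every non-leading term carries the factor $k(k-1)$ --- the linear term in $u_{k,2}$ is $k\,\dt/(n(n-1))$ with only a factor of $k$ --- but at $k=0$ it still vanishes, so the conclusion is unaffected.
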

\begin{proof}
  Since $u_0 = (1, 0)$, the result is true for $k=0$. Assuming the result holds
  for $k$, inserting \eqref{eq:ode-u1-u2} into \eqref{eq:ssprkn23} proves the
  result for $k+1$ and thus for general $k \in \set{0,\dots,\frac{n(n+1)}{2}}$.
\end{proof}

\begin{lemma}
\label{lem:ssprkn23-v}
  For the ODE \eqref{eq:ode} with parameters \eqref{eq:ode-u1-u2}, the stages
  $v_{k}$, $k \in \set{0, \dots, \frac{n(n-1)}{2}}$, in \eqref{eq:ssprkn23} satisfy
  \begin{align*}
  \stepcounter{equation}\tag{\theequation}
    v_{k,1}
    &=
    1
    + \frac{-4 k^2+k (-4 n^2+4 n+4)-n (n^3-2 n^2+3 n-2)}{8
    (n-1)^2 n^2} \dt^2
    \\&\qquad
    + \Bigl( 8 k^3+4 k^2 (3 n^2-3 n-4)+2 k (3 n^4-6 n^3-n^2+4
    n+4)
    \\&\qquad\qquad
    +n (n^5-3 n^4+11 n^3-17 n^2+4 n+4)
    \Bigr) \frac{1}{16 (n-1)^3 n^3} \dt^3
    \\&\qquad
    + \Bigl( 16 k^4+32 k^3 (n^2-n-1)+8 k^2 (3 n^4-6 n^3-3 n^2+6
    n-2)
    \\&\qquad\qquad
    +8 k (n^6-3 n^5+5 n^3+3 n^2-6 n+4)
    \\&\qquad\qquad
    +n (n^7-4 n^6+26 n^5-64 n^4+57 n^3-12 n^2-20 n+16)
    \Bigr) \frac{1}{192 (n-1)^4 n^4} \dt^4
    \\&\qquad
    + \Bigl( 96 k^5+16 k^4 (15 n^2-15 n-38)+16 k^3 (15 n^4-30
    n^3-41 n^2+56 n+86)
    \\&\qquad\qquad
    +8 k^2 (15 n^6-45 n^5+27 n^4+21 n^3+96
    n^2-114 n-164)
    \\&\qquad\qquad
    +2 k (15 n^8-60 n^7+178 n^6-324 n^5+11 n^4+448
    n^3-284 n^2+16 n+224)
    \\&\qquad\qquad
    +n (3 n^9-15 n^8+112 n^7-358 n^6+247
    n^5+449 n^4-354 n^3-428 n^2+120 n+224)
    \Bigr)
    \\&\qquad\qquad
    \frac{1}{384 (n-1)^5 n^5} \dt^5
    + \O(\dt^6),
  \end{align*}
  and
  \begin{align*}
  \stepcounter{equation}\tag{\theequation}
    v_{k,2}
    &=
    \frac{2 k+n^2-n}{2 (n-1) n} \dt
    + \frac{-4 k^2+k (-4 n^2+4 n+4)-n (n^3-2 n^2+3 n-2)}{8
    (n-1)^2 n^2} \dt^2
    \\&\qquad
    - \Bigl(8 k^3+12 k^2 (n^2-n-2)+2 k (3 n^4-6 n^3+3
    n^2+8)
    \\&\qquad\qquad
    +n (n^5-3 n^4+9 n^3-13 n^2-2 n+8)
    \Bigr) \frac{1}{48 (n-1)^3 n^3} \dt^3
    \\&\qquad
    + \Bigl(80 k^4+32 k^3 (5 n^2-5 n-11)+8 k^2 (15 n^4-30 n^3-27
    n^2+42 n+62)
    \\&\qquad\qquad
    +8 k (5 n^6-15 n^5+30 n^4-35 n^3+21 n^2-6
    n-28)
    \\&\qquad\qquad
    +n (5 n^7-20 n^6+106 n^5-248 n^4+69 n^3+252 n^2-52 n-112)
    \Bigr) \frac{1}{192 (n-1)^4 n^4} \dt^4
    \\&\qquad
    - \Bigl(416 k^5+80 k^4 (13 n^2-13 n-32)+80 k^3 (13 n^4-26
    n^3-39 n^2+52 n+70)
    \\&\qquad\qquad
    +40 k^2 (13 n^6-39 n^5+15 n^4+35 n^3+98
    n^2-122 n-128)
    \\&\qquad\qquad
    +2 k (65 n^8-260 n^7+950 n^6-1940 n^5+725
    n^4+1480 n^3-1500 n^2+480 n+832)
    \\&\qquad\qquad
    +n (13 n^9-65 n^8+490
    n^7-1570 n^6+1165 n^5+1727 n^4-1508 n^3-1564 n^2
    \\&\qquad\qquad\qquad
    +480 n+832)
    \Bigr) \frac{1}{1920 (n-1)^5 n^5} \dt^5
    + \O(\dt^6).
  \end{align*}
\end{lemma}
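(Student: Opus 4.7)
The plan is to mimic the inductive structure used for Lemma~\ref{lem:ssprkn23-u}, but with a nontrivial base case that must be computed from the formulas just established for the $u_k$. Specifically, I would first substitute $k = \tfrac{(n-1)(n-2)}{2}$ and $k = \tfrac{n(n+1)}{2}$ into the expressions for $u_{k,1}$ and $u_{k,2}$ from Lemma~\ref{lem:ssprkn23-u} and take the convex combination
\[
v_0 = \frac{n}{2n-1}\, u_{\frac{(n-1)(n-2)}{2}} + \frac{n-1}{2n-1}\, u_{\frac{n(n+1)}{2}}
\]
componentwise. This yields two polynomials in $\dt$ (with coefficients rational in $n$) that must match the claimed formulas at $k=0$; verifying this is the base case.

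Next, I would induct on $k \in \set{0,\dots,\tfrac{n(n-1)}{2}}$ using the update $v_k = v_{k-1} + \tfrac{\dt}{n(n-1)} g(v_{k-1})$ with $g$ as in \eqref{eq:ode-u1-u2}, namely $g(v) = (v_1 - v_2)(-v_2, v_1)$. Substituting the inductive ansatz for $v_{k-1,1}, v_{k-1,2}$ into this recursion gives, componentwise, the two products $(v_{k-1,1}-v_{k-1,2})v_{k-1,2}$ and $(v_{k-1,1}-v_{k-1,2})v_{k-1,1}$ truncated to $\O(\dt^5)$. Expanding the new polynomials in $\dt$ and matching coefficients of each power $\dt^j$, $j \in \set{0,1,2,3,4,5}$, against the claimed formulas at $k$ reduces the step to showing a sequence of polynomial identities in $k$ and $n$. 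Each of these identities is of fixed degree and so can be checked by equating coefficients in a straightforward way.

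The main obstacle is purely algebraic: the coefficients of $\dt^4$ and $\dt^5$ are polynomials of large combined degree in $k$ and $n$, and even writing the cross terms arising from $(v_{k-1,1}-v_{k-1,2})v_{k-1,i}$ produces expressions that would be extremely tedious to handle by hand. As with Lemma~\ref{lem:erk33-1}, Lemma~\ref{lem:erk32-u1-r-u2}, and Lemma~\ref{lem:erk32-r-u1-u2} elsewhere in the paper, I would delegate the symbolic verification of both the base case and the inductive step to Mathematica~\citep{mathematica10}, checking the identities after performing the substitution and simplifying. Since each step is a polynomial identity valid for all $n \geq 2$ and all admissible $k$, the induction closes once the verification succeeds, completing the proof.
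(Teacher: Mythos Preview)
Your proposal is correct and follows essentially the same approach as the paper: verify the base case $k=0$ from Lemma~\ref{lem:ssprkn23-u} via the convex combination defining $v_0$, then induct on $k$ using the recursion $v_k = v_{k-1} + \frac{\dt}{n(n-1)} g(v_{k-1})$ with $g$ from \eqref{eq:ode-u1-u2}. The paper's proof is stated in two sentences and leaves the polynomial identities implicit, so your explicit mention of delegating the symbolic verification to Mathematica is entirely in keeping with the treatment of the surrounding lemmas.
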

\begin{proof}
  Using Lemma~\ref{lem:ssprkn23-u}, the result can be verified for $k=0$. Assuming
  that the result holds for $k$, inserting \eqref{eq:ode-u1-u2} into \eqref{eq:ssprkn23}
  proves the result for $k+1$ and thus for general $k \in \set{0,\dots,\frac{n(n-1)}{2}}$.
\end{proof}

\begin{proof}[Proof of Theorem~\ref{thm:ssprkn23}]
  Using Lemma~\ref{lem:ssprkn23-v},
  \begin{equation}
  \begin{aligned}
    \norm{u_+}^2 - \norm{u_0}^2
    &=
    \frac{n^2 - n- 2}{12 n^2 (n-1)^2} \dt^4
    + \frac{n^2 - n + 3}{6 n^2 (n-1)^2} \dt^5
    + \O(\dt^6).
  \end{aligned}
  \end{equation}
  For $n = 2$, $\frac{n^2 - n- 2}{12 n^2 (n-1)^2} = 0$ and
  $\frac{n^2 - n + 3}{6 n^2 (n-1)^2} > 0$. For $n \geq 3$,
  $\frac{n^2 - n- 2}{12 n^2 (n-1)^2} > 0$.
  Thus, $\norm{u_+}^2 > \norm{u_0}^2$ for small $\dt > 0$ and Theorem~\ref{thm:ssprkn23}
  is proved by applying the same arguments as in the proofs given hitherto
  to reduce the Lipschitz constant as desired.
\end{proof}

\subsection{Ten Stage, Fourth Order Method SSPRK(10,4)}
\label{subsec:ssprk104}

The explicit strong-stability preserving method SSPRK(10,4) of \citet{ketcheson2008highly}
is given by the Butcher tableau
\begin{equation}
\begin{aligned}
  \begin{array}{c|cccccccccc}
  0 &  &  &  &  &  &  &  &  &  & \\
  \nicefrac{1}{6} & \nicefrac{1}{6} &  &  &  &  &  &  &  &  & \\
  \nicefrac{1}{3} & \nicefrac{1}{6} & \nicefrac{1}{6} &  &  &  &  &  &  &  & \\
  \nicefrac{1}{2} & \nicefrac{1}{6} & \nicefrac{1}{6} & \nicefrac{1}{6} &  &  &  &  &  &  & \\
  \nicefrac{2}{3} & \nicefrac{1}{6} & \nicefrac{1}{6} & \nicefrac{1}{6} & \nicefrac{1}{6} &  &  &  &  &  & \\
  \nicefrac{1}{3} & \nicefrac{1}{15} & \nicefrac{1}{15} & \nicefrac{1}{15} & \nicefrac{1}{15} & \nicefrac{1}{15} &  &  &  &  & \\
  \nicefrac{1}{2} & \nicefrac{1}{15} & \nicefrac{1}{15} & \nicefrac{1}{15} & \nicefrac{1}{15} & \nicefrac{1}{15} & \nicefrac{1}{6} &  &  &  & \\
  \nicefrac{2}{3} & \nicefrac{1}{15} & \nicefrac{1}{15} & \nicefrac{1}{15} & \nicefrac{1}{15} & \nicefrac{1}{15} & \nicefrac{1}{6} & \nicefrac{1}{6} &  &  & \\
  \nicefrac{5}{6} & \nicefrac{1}{15} & \nicefrac{1}{15} & \nicefrac{1}{15} & \nicefrac{1}{15} & \nicefrac{1}{15} & \nicefrac{1}{6} & \nicefrac{1}{6} & \nicefrac{1}{6} &  & \\
  1 & \nicefrac{1}{15} & \nicefrac{1}{15} & \nicefrac{1}{15} & \nicefrac{1}{15} & \nicefrac{1}{15} & \nicefrac{1}{6} & \nicefrac{1}{6} & \nicefrac{1}{6} & \nicefrac{1}{6} & \\
  \hline
  & \nicefrac{1}{10} & \nicefrac{1}{10} & \nicefrac{1}{10} & \nicefrac{1}{10} & \nicefrac{1}{10} & \nicefrac{1}{10} & \nicefrac{1}{10} & \nicefrac{1}{10} & \nicefrac{1}{10} & \nicefrac{1}{10}\\
  \end{array}
\end{aligned}
\end{equation}
which is not sparse in the sense of many zeros, but ``data sparse'' in the sense
of a clear structure with few different values of the entries. This results in
a sparse Shu-Osher form and the low-storage implementation
\begin{equation}
\begin{gathered}
\begin{aligned}
  u_1
  &:=
  u_0,
  &
  u_i
  &:=
  u_{i-1} + \frac{\Delta t}{6} g(u_{i-1}),
  \; i \in \set{2,3,4,5},
  \\
  u_6
  &:=
  \frac{3}{5} u_0 + \frac{2}{5} \left( u_5 + \frac{\Delta t}{6} g(u_5) \right),
  &
  u_i
  &:=
  u_{i-1} + \frac{\Delta t}{6} g(u_{i-1}),
  \; i \in \set{7,8,9,10},
\end{aligned}
\\
  u_+
  :=
  \frac{1}{25} u_0
  + \frac{9}{25} \left( u_5 + \frac{\Delta t}{6} g(u_5) \right)
  + \frac{3}{5} \left( u_{10} + \frac{\Delta t}{6} g(u_{10}) \right).
\end{gathered}
\end{equation}

\begin{theorem}
\label{thm:ssprk104}
  The ten stage, fourth order, explicit strong stability preserving method SSPRK(10,4)
  of \citet{ketcheson2008highly} is not strongly stable for the ODE \eqref{eq:ode}
  for all smooth and semibounded $g$ with $\normLip{g} \leq L$.
\end{theorem}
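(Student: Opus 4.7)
The plan is to follow the template established in the proofs of Theorems~\ref{thm:ssprks2} and \ref{thm:ssprkn23}: apply SSPRK(10,4) to the ODE \eqref{eq:ode} with the nonlinear, energy-conserving right hand side \eqref{eq:ode-u1-u2} and determine the sign of the leading non-vanishing coefficient in the Taylor expansion of $\norm{u_+}^2 - \norm{u_0}^2$ in $\dt$. Since $\scp{u}{g(u)} = 0$ for \eqref{eq:ode-u1-u2}, the exact flow is norm-preserving, and because SSPRK(10,4) is of fourth order of accuracy one has $u_+ - u(\dt) = \O(\dt^5)$, so $\norm{u_+}^2 - \norm{u_0}^2 = \O(\dt^5)$ and it suffices to exhibit a strictly positive coefficient at the first non-vanishing order.

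First I would expand the ten stage values as polynomials in $\dt$ by iterating the low-storage recursion $u_i = u_{i-1} + (\dt/6)\,g(u_{i-1})$ on the two forward-Euler chains $u_1,\dots,u_5$ and $u_7,\dots,u_{10}$, in complete analogy with Lemma~\ref{lem:ssprks2} (with step $\dt/6$ and initial value $u_0$ for the first chain, and initial value $u_6$ for the second), interleaved with the convex-combination step $u_6 = \tfrac{3}{5} u_0 + \tfrac{2}{5}(u_5 + (\dt/6)\,g(u_5))$. Substituting these expansions into the formula $u_+ = \tfrac{1}{25} u_0 + \tfrac{9}{25}(u_5 + (\dt/6)\,g(u_5)) + \tfrac{3}{5}(u_{10} + (\dt/6)\,g(u_{10}))$ and expanding $\norm{u_+}^2 - \norm{u_0}^2$ to order $\dt^6$ produces a polynomial whose coefficients are explicit rational numbers. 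I expect the coefficient of $\dt^5$ to be strictly positive; in the (unlikely) event that it vanishes, one would fall back to the $\dt^6$ term, exactly as was needed for $n=2$ in Theorem~\ref{thm:ssprkn23}.

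Once the sign is established for the unmodified polynomial right hand side, the extension to a globally $L$-Lipschitz, semibounded $g$ proceeds by the same argument used in all preceding theorems: only one step is taken, so the values of $g$ outside a small ball around $u_0$ are irrelevant; Kirszbraun's theorem supplies an extension with the same (local) Lipschitz constant, and a rescaling $g \mapsto \epsilon g$ together with $\dt \mapsto \dt/\epsilon$ brings the Lipschitz constant below any prescribed $L$ while preserving the sign of the leading term in $\norm{u_+}^2 - \norm{u_0}^2$.

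The main obstacle is purely the bulk of the symbolic computation: ten coupled stages, a convex combination in the middle and another one at the end, and expansion to order $\dt^6$ in two components. This is mechanical but not illuminating to carry out by hand; as with Lemmas~\ref{lem:ssprkn23-u} and \ref{lem:ssprkn23-v}, I would perform and verify it with a computer algebra system such as Mathematica~\citep{mathematica10}, so that the proof reduces to reporting the explicit value of the leading coefficient.
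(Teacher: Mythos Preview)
Your approach is essentially the paper's: apply SSPRK(10,4) to the test problem \eqref{eq:ode-u1-u2}, compute the Taylor expansion of $\norm{u_+}^2 - \norm{u_0}^2$ symbolically, read off the sign of the leading coefficient, and finish with the same Kirszbraun/rescaling argument. The one point where your expectation is off is the order of the leading term: the $\dt^5$ coefficient actually vanishes, and the paper obtains $\norm{u_+}^2 - \norm{u_0}^2 = \dt^6 p(\dt)$ with $p(0) = \tfrac{23}{3240} > 0$; since you explicitly anticipated this fallback, the plan goes through unchanged.
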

\begin{proof}
  Consider the ODE \eqref{eq:ode} with parameters \eqref{eq:ode-u1-u2}. A lengthy
  calculation that has been verified using Mathematica~\citep{mathematica10}
  shows that $\norm{u_+}^2 - \norm{u_0}^2 = \dt^6 p(\dt)$, where $p(\dt)$ is a
  polynomial in $\dt$ with
  \begin{equation}
    p(\dt)
    =
    \frac{23}{3240} - \frac{1}{240} \dt - \frac{161}{29160} \dt^2 + \O(\dt^3).
  \end{equation}
  Hence, there is some $\tau > 0$ such that $\norm{u_+}^2 - \norm{u_0}^2 > 0$
  for all $\dt \in (0,\tau)$. Thus, SSPRK(10,4) is not strongly stable for this
  test problem. The same arguments as in the proofs given hitherto can be used
  to reduce the Lipschitz constant as desired.
\end{proof}

\section{Three Stage, Third Order Method SSPRK(3,3)}
\label{sec:ssprk33}

The third-order explicit strong stability preserving Runge--Kutta method SSPRK(3,3)
with three stages given by \citet{shu1988efficient} is determined by the Butcher
tableau
\begin{equation}
\begin{aligned}
  \begin{array}{c|ccc}
  0 &  & &  \\
  1 & 1 & & \\
  \nicefrac{1}{2} & \nicefrac{1}{4} & \nicefrac{1}{4} & \\
  \hline
  & \nicefrac{1}{6} & \nicefrac{1}{6} & \nicefrac{2}{3} \\
  \end{array}
\end{aligned}
\end{equation}
and can be represented using the Shu-Osher form
\begin{equation}
\label{eq:ssprk33}
\begin{aligned}
  u_1
  &:=
  u_0,
  \\
  u_2
  &:=
  u_1 + \dt g(u_1),
  \\
  u_3
  &:=
  \frac{3}{4} u_0 + \frac{1}{4} \left( u_2 + \dt g(u_2) \right)
  =
  u_0 + \frac{1}{4} \dt g(u_1) + \frac{1}{4} \dt g(u_2),
  \\
  u_+
  &:=
  \frac{1}{3} u_0 + \frac{2}{3} \left( u_3 + \dt g(u_3) \right)
  =
  u_0 + \frac{1}{6} \dt g(u_1) + \frac{1}{6} \dt g(u_2) + \frac{2}{3} \dt g(u_3).
\end{aligned}
\end{equation}
Theorem~\ref{thm:ERK-SSP-3-3} implies that SSPRK(3,3) is not strongly stable for
general semibounded $g$. Since this method is often used, it is considered for
further stability investigations in this section. In particular, the following
properties will be studied.
\begin{itemize}
  \item
  The classical fourth order, four stage explicit Runge--Kutta method RK(4,4) is
  not strongly stable for general semibounded and linear $g$. However, the method
  given by two consecutive steps of RK(4,4) is strongly stable for such $g$, cf.
  \cite{sun2017stability}. Thus, it is of interest whether something similar is
  true for SSPRK(3,3) for general nonlinear semibounded $g$.

  \item
  Even if SSPRK(3,3) is not strongly stable after a finite number of steps, the
  increase of the norm might still be bounded, cf. \cite{hundsdorfer2009stepsize,
  hundsdorfer2011boundedness, hundsdorfer2011special} for investigations of such
  a property when the explicit Euler method is assumed to be strongly stable.
  Although boundedness and monotonicity are equivalent in this context for a
  large class of Runge--Kutta methods \citep{hundsdorfer2011boundedness}, this
  result cannot be applied here directly. Hence, it is of interest to study the
  behaviour of SSPRK(3,3) for semibounded $g$.
\end{itemize}
Of course, both properties are related in some way. In particular, it will be proven
that there are semibounded $g$ such that the norm of a numerical solution obtained
using SSPRK(3,3) is monotonically increasing and unbounded. This implies that
SSPRK(3,3) cannot be strongly stable after any finite number of steps.

\begin{theorem}
\label{thm:ssprk33}
  There are smooth and semibounded $g$  with $\normLip{g} \leq L$ and the property
  that the application of
  the three stage, third order explicit strong stability preserving Runge--Kutta
  method SSPRK(3,3) of \cite{shu1988efficient} to the ODE \eqref{eq:ode} yields
  a numerical solution $\unum$ such that the sequence
  $\bigl( \norm{\unum(n \dt)}^2 \bigr)_{n \in \N_0}$
  is monotonically increasing and unbounded for every $\dt > 0$.
\end{theorem}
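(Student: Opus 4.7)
My approach is to construct a single smooth, semibounded $g$ with $\normLip{g} \leq L$ by starting from the polynomial rotation field used throughout the preceding proofs and then arguing that, at every iterate and for every $\dt > 0$, SSPRK(3,3) generates a strictly positive and summable energy increment. Concretely, I would take the two-dimensional vector field $g_0(u) = (u_1 - u_2)(-u_2,u_1)^\top$, which is smooth, polynomial, autonomous, and satisfies $\scp{u}{g_0(u)} = 0$, so it is trivially semibounded.

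\textbf{Step 1: One-step positivity.} Starting from $u_0 = (1,0)^\top$, insert $g_0$ into the Shu--Osher representation \eqref{eq:ssprk33} and compute $\norm{u_+}^2 - \norm{u_0}^2$ explicitly as a polynomial $P(\dt)$, following the same kind of direct (Mathematica-verified) calculations used in Lemma~\ref{lem:erk33-1} and Lemma~\ref{lem:ssprks2}. Because SSPRK(3,3) is third-order accurate and the exact flow is energy-neutral, $P(\dt) = c_4 \dt^4 + c_5 \dt^5 + \cdots$ with $c_4 > 0$ (already forced by Theorem~\ref{thm:ERK-SSP-3-3}). I would then show that the full polynomial $P(\dt)$ is strictly positive on $(0,\infty)$, not just for small $\dt$, by exhibiting it as a sum of squares or by factoring out $\dt^4$ and checking that the residual has no positive root.

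\textbf{Step 2: Iteration and unbounded growth.} The field $g_0$ is homogeneous of degree two, so SSPRK(3,3) obeys the scaling $u_+^{(\lambda u_0,\dt)} = \lambda\, u_+^{(u_0,\lambda \dt)}$, and the problem is rotation-covariant in the sense that replacing $u_0$ with $R_\theta u_0$ (rotation by $\theta$) rotates every stage. Combining these two symmetries, the $n$-th step reduces to the canonical $(1,0)^\top$ analysis with an effective time step $\norm{u_n} \dt$ (up to a direction-dependent factor of order one). The uniform positivity proved in Step~1 then gives $\norm{u_{n+1}}^2 - \norm{u_n}^2 \geq \gamma(\norm{u_n}\dt)\,\norm{u_n}^2$ with $\gamma > 0$ on $(0,\infty)$, which both establishes strict monotonicity and, after summation, forces $\norm{u_n}^2 \to \infty$.

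\textbf{Step 3: Lipschitz extension.} Since $g_0$ is polynomial and hence not globally Lipschitz, I would invoke Kirszbraun's theorem to modify it outside a suitable region. The natural trick is to work with a rescaled/damped version $g(u) = \phi(\norm{u}) \, g_0(u)$ where $\phi$ is a smooth bounded factor chosen so that the resulting field is $L$-Lipschitz on all of $\H$ but $\phi \equiv 1$ on the portion of the orbit that matters; alternatively, one fixes $L$ and then rescales time and space so that the iterate trajectory stays inside the ball where $g$ equals $g_0$ for every prescribed $\dt$. Either route preserves semiboundedness and the one-step positivity analysis.

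\textbf{Main obstacle.} The hard part is not the one-step calculation but the universality requirement ``for every $\dt > 0$''. For linear skew-symmetric $\L$ the SSPRK(3,3) stability function satisfies $\abs{R(\mathrm{i}y)}^2 = 1 - y^4/12 + y^6/36 < 1$ for small $y$, so the norm would actually shrink; hence the mechanism has to be genuinely nonlinear, and the polynomial $P(\dt)$ from Step~1 must be checked to be \emph{globally} positive on $(0,\infty)$, not merely to leading order. Simultaneously reconciling (i) strict per-step increase for all $\dt > 0$, (ii) a lower bound on the increment sufficient to force divergence of $\sum_n(\norm{u_{n+1}}^2 - \norm{u_n}^2)$, and (iii) global $L$-Lipschitz continuity of $g$ forces the final choice of damping factor $\phi$ and/or rescaling to be delicate, and this gluing is where I expect the proof to require its most careful work.
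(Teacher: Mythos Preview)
Your Step~2 rests on a rotation-equivariance claim that is false for $g_0(u) = (u_1 - u_2)(-u_2,u_1)^\top$: writing $g_0(u) = (u_1-u_2)\,Ju$ with $J$ the rotation by $\pi/2$, one has $g_0(R_\theta u) = \bigl((R_\theta u)_1 - (R_\theta u)_2\bigr) J R_\theta u$, whereas $R_\theta g_0(u) = (u_1-u_2) J R_\theta u$, and the scalar prefactors differ. Consequently the one-step computation from $(1,0)^\top$ does not transfer to later iterates. Worse, $g_0$ vanishes on the line $\{u_1 = u_2\}$, so the ``direction-dependent factor of order one'' in your lower bound can actually be zero, and you have no mechanism preventing the orbit from approaching that line. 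Step~3 is independently in conflict with the conclusion: a polynomial field of degree two cannot be damped to a globally $L$-Lipschitz $g$ that agrees with $g_0$ along an \emph{unbounded} orbit, so either the Lipschitz bound or the unbounded growth must fail with this choice of $g_0$.

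The paper avoids both obstructions by a different choice of field: it takes $g(u) = \alpha \norm{u}^{-2} (-u_2,u_1)^\top$, which is genuinely rotation-equivariant, so the one-step energy increment depends only on $\norm{u_0}^2$. A direct computation gives $\norm{u_+}^2 - \norm{u_0}^2 = f_{\alpha\dt}(\norm{u_0}^2)$ for an explicit rational function $f_{\dt}(x)$ that is strictly positive for all $\dt > 0$ and $x > 0$; monotone increase is immediate, and boundedness would force a positive fixed point $f_{\alpha\dt}(x)=0$, which does not exist. Because this $g$ is homogeneous of degree $-1$, its Lipschitz constant on $\{\norm{u}\geq 1\}$ is at most $3\alpha$, so one may modify it near the origin (which the increasing orbit never visits) and choose $\alpha$ to meet any prescribed $L$. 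The key idea you are missing is precisely this switch to a rotation-equivariant, negatively homogeneous field.
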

As the proofs of the previous results, this one is based on the explicit construction
of carefully designed test problems. Here, the ODE \eqref{eq:ode} with
\begin{equation}
\label{eq:ode-u1-u2-norm}
\begin{aligned}
  u(t) = \vect{u_1(t) \\ u_2(t)},
  \quad
  g(u) = \frac{\alpha}{\norm{u}^2} \vect{-u_2 \\ u_1},
\end{aligned}
\end{equation}
will be considered for $\alpha > 0$. Since the norm of the numerical solution will
be shown to increase monotonically and the norm of a smooth solutions remains constant,
the function $g$ could be modified to remove the singularity at zero and keep the
Lipschitz constant as small as desired for a suitable choice of $\alpha > 0$.
Indeed, for $\norm{u},\norm{v} \geq 1$,
\begin{equation}
\begin{aligned}
  \norm{g(u) - g(v)}
  &=
  \frac{\alpha}{\norm{u} \norm{v}}
    \norm{\,\norm{v} \frac{u}{\norm{u}} - \norm{u} \frac{u}{\norm{u}}
            + \norm{u} \frac{u}{\norm{u}} - \norm{u} \frac{v}{\norm{v}} }
  \\
  &\leq
  \frac{\alpha}{\norm{u} \norm{v}} \left(
    \abs{\,\norm{v} - \norm{u}}
    + \norm{v}^{-1} \norm{\,\norm{v} u - \norm{v} v + \norm{v} v - \norm{u} v}
  \right)
  \\
  &\leq
  \frac{3 \alpha}{\norm{u} \norm{v}} \norm{u-v},
\end{aligned}
\end{equation}
showing that the Lipschitz constant of $g$ is bounded from above by $3 \alpha$
in $\R^2 \setminus B_1(0)$.
\begin{proof}[Proof of Theorem~\ref{thm:ssprk33}]
  Consider one step of SSPRK(3,3) from $u_0 = (u_{0,1}, u_{0,2})$ to
  $u_+ = (u_{+,1}, u_{+,2})$. A lengthy calculation that has been verified using
  Mathematica~\citep{mathematica10} yields
  \begin{equation}
  \begin{aligned}
    \norm{u_+}^2 - \norm{u_0}^2
    &=
    f_{\alpha \dt}\bigl( \norm{u_0}^2 \bigr),
    \\
    f_{\dt}(x)
    &=
    \dt^4
    \frac{\dt^4 + 196 \dt^2 x^2 + 240 x^4}
         {36 x \bigl( \dt^2 + x^2 \bigr)
                            \bigl( \dt^4 + 12 \dt^2 x^2 + 16 x^4 \bigr)},
  \end{aligned}
  \end{equation}
  for $\dt > 0$. In particular, the squared norms of the numerical solution
  $\norm{ \unum(n \dt) }^2$ are given recursively by
  \begin{equation}
    \norm{ \unum((n+1) \dt) }^2
    =
    \norm{ \unum(n \dt) }^2
    + f_{\alpha \dt}\bigl( \norm{ \unum(n \dt) }^2 \bigr).
  \end{equation}
  Since $f_{\dt}(x) > 0$ for every $\dt > 0$ and $x > 0$, they increase
  monotonically with $n$ if, e.g. $\norm{u_0} = 1$.
  If they were bounded, the sequence of squared norms would have a (positive)
  limit $x$ satisfying $f_{\alpha \dt}(x) = 0$ which is impossible for $\alpha \dt > 0$.
\end{proof}

Numerical solutions of \eqref{eq:ode-u1-u2-norm} with $\alpha = 1$ and initial
condition $u_0 = (1, 0)$ have been computed using SSPRK(3,3) implemented in
DifferentialEquations.jl \citep{rackauckas2017differentialequations} in Julia \texttt{v1.1.0}
\citep{bezanson2017julia} using using floating point numbers with extended precision
(\texttt{BigFloat} with \texttt{setprecision(500)}).
As visualised in Figure~\ref{fig:ssprk33}, the energy increases monotonically
for every time step $\dt > 0$, in accordance with Theorem~\ref{thm:ssprk33}.
\begin{figure}[!ht]
\centering
\captionsetup{aboveskip=-4pt}
  \includegraphics[width=0.6\textwidth]{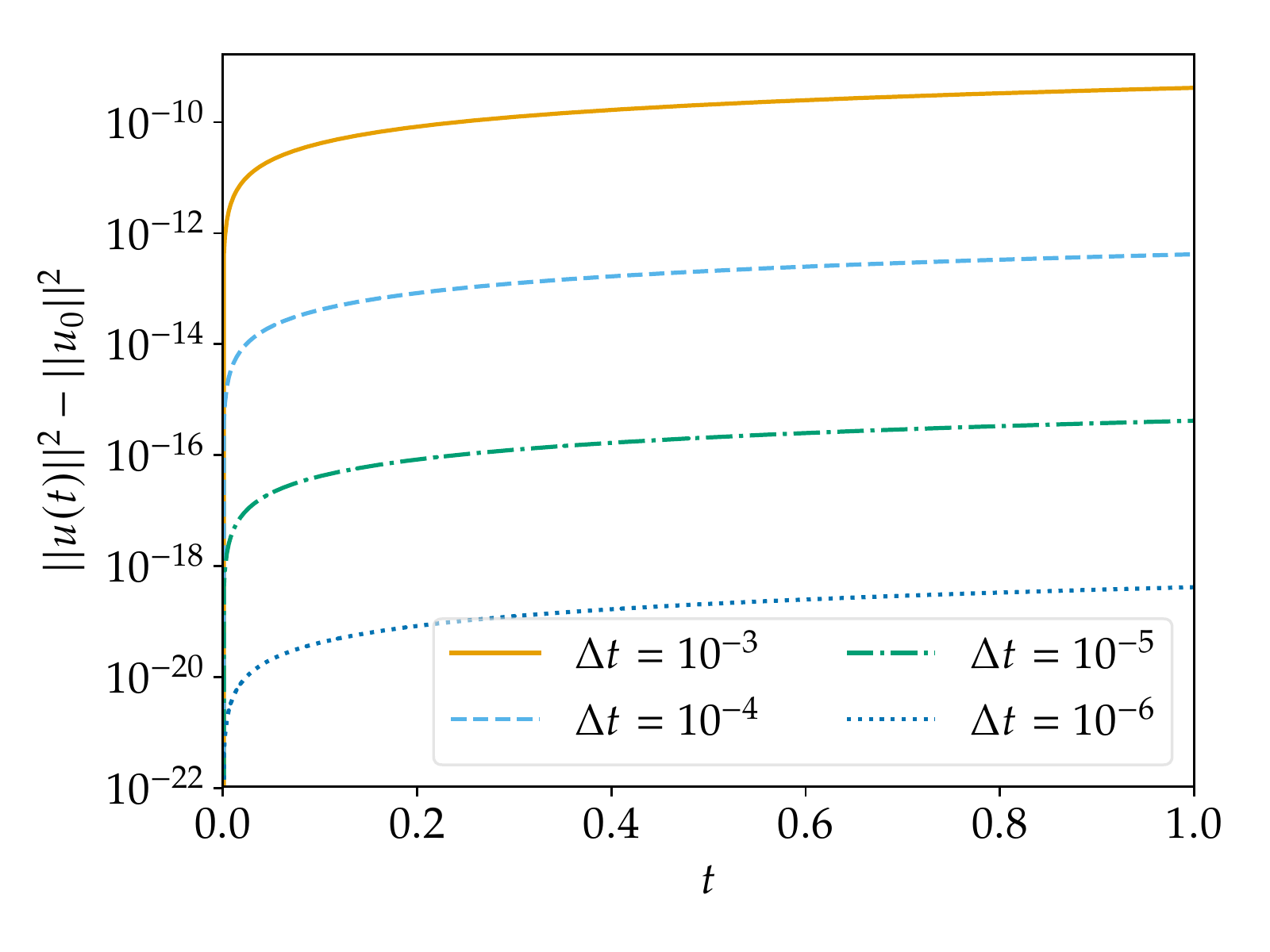}
  \caption{Evolution of the energy of numerical solutions computed using SSPRK(3,3)
           with different time steps $\dt$.}
  \label{fig:ssprk33}
\end{figure}

\begin{remark}
  In applications, only finite final times $T > 0$ are relevant. Hence, it can be
  interesting whether a bound of the form
  \begin{equation}
    \forall n \in \N, n \dt \leq T\colon \quad
    \norm{u(n \dt)} \leq c_T
  \end{equation}
  holds for some constant $c_T > 0$ depending on $T$. For explicit schemes,
  a useful bound seems to require an additional restriction of the time step $\dt$
  because of stability issues. Choosing $0 < \dt < \dt_{\mathrm{max}}$ small enough,
  such a bound will trivially hold if the scheme converges. However, it does not
  seem to be trivial to guarantee good estimates of $c_T$ and $\dt_{\mathrm{max}}$
  in general.
\end{remark}

\section{Ten Stage, Fourth Order Method SSPRK(10,4) and the Transport Equation}
\label{sec:ssprk104}

In this section, the method SSPRK(10,4) of \cite{ketcheson2008highly} described
in section~\ref{subsec:ssprk104} will be used to integrate a semidiscretisation
of a hyperbolic conservation law in time. It will be demonstrated numerically
that the energy (squared norm) of the solution increases for a wide range of
positive time steps.

Consider the linear advection equation with periodic boundary conditions
\begin{equation}
\label{eq:const-lin-adv}
\begin{aligned}
  \partial_t u(t,x) + \partial_x u(t,x) &= 0, && t \in (0,T), x \in (x_L,x_R),
  \\
  u(0,x) &= u_0(x), && x \in (x_L,x_R),
  \\
  u(t,x_L) &= u(t,x_R), && t \in (0,T),
\end{aligned}
\end{equation}
and the initial condition $u_0(x) = -\sin(\pi x)$ in the domain $(x_L,x_R) = (-1,1)$.
Using the $L^2$ entropy $U(u) = \frac{1}{2} u^2$, the entropy flux is
$F(u) = \frac{1}{2} u^2$ and the flux potential is $\psi(u) = \frac{1}{2} u^2$.
Smooth solutions fulfil $\norm{u(t)}_2^2 = \norm{u_0}_2^2$ and the entropy inequality
\begin{equation}
  \partial_t u(t,x)^2 + \partial_x u(t,x)^2 \leq 0
\end{equation}
yields $\norm{u(t)}_2^2 \leq \norm{u_0}_2^2$ for general solutions, cf.
\cite{tadmor1987numerical, tadmor2003entropy}.

Recently, \citet{abgrall2018general} proposed a general method to make numerical
schemes entropy conservative/stable. He described this approach using residual
distribution schemes and explains how some other frameworks can be recast in
this way, see also \cite{abgrall2017some, abgrall2018connection}. For nodal
discontinuous Galerkin (DG) schemes in one space dimension, this approach will be
adapted in the following.

Consider a general polynomial collocation approach using $p+1$ nodes and polynomials
of degree $\leq p$ in an element $[x_{i-1}, x_{i}]$. Besides the choice of the nodes,
the main ingredients are
\begin{itemize}
  \item
  a mass matrix $\mat{M}$, approximating the $L^2$ scalar product via
  $\int_{x_{i-1}}^{x_{i}} u(x) v(x) \dif x = \scp{u}{v}_{L^2} \approx
  \scp{\vec{u}}{\vec{v}}_M = \vec{u}^T \mat{M} \vec{v}$.

  \item
  a derivative matrix $\mat{D}$, approximating the derivative $\partial_x u
  \approx \mat{D} \vec{u}$.

  \item
  a restriction operator $\mat{R}$, performing interpolation to the boundary nodes
  $x_{i-1}, x_{i}$ via $\mat{R} \vec{u} = (u_L, u_R)^T$.

  \item
  a diagonal boundary matrix $\mat{B} = \diag{-1, 1}$, giving the difference of boundary
  values as in the fundamental theorem of calculus.
\end{itemize}

If the mass matrix is exact for polynomials of degree $\leq 2p-1$, the
summation by parts property
\begin{equation}
\label{eq:SBP}
  \mat{M} \mat{D} + \mat{D}[^T] \mat{M} = \mat{R}[^T] \mat{B} \mat{R}
\end{equation}
will be satisfied, cf. \cite{hicken2013summation, gassner2013skew, fernandez2014generalized}.
In that case, semidiscrete stability can be proven in many cases, as described in
the review articles of \citet{svard2014review, fernandez2014review} and references
cited therein.

A nodal DG semidiscretisation of the advection equation \eqref{eq:const-lin-adv}
will be performed as follows. At first, the domain $(x_L,x_R)$ is divided
uniformly into $N$ non-overlapping elements. Each element is mapped via an
affine-linear mapping to the reference element $(-1,1)$ and all computations are
performed there. On each element, the semidiscretisation is
\begin{equation}
\label{eq:const-lin-adv-DG-standard}
  \partial_t \vec{u}
  + \mat{D} \vec{u}
  =
  - \mat{M}[^{-1}] \mat{R}[^T] \mat{B} \left( \vecfnum - \mat{R} \vec{u} \right),
\end{equation}
where the numerical flux will be the central flux $\fnum(u_-,u_+) = \frac{u_-+u_+}{2}$.
This flux is entropy conservative for the $L^2$ entropy $U(u) = \frac{u^2}{2}$.
Thus, if SBP operators are used, e.g. via bases on Gauss or Lobatto Legendre nodes,
the resulting semidiscretisation is entropy conservative. Indeed, using SBP
operators yields
\begin{equation}
\begin{aligned}
  \vec{u}^T \mat{M} \partial_t \vec{u}
  &=
  - \vec{u}^T \mat{M} \mat{D} \vec{u}
  - \vec{u}^T \mat{R}[^T] \mat{B} \left( \vecfnum - \mat{R} \vec{u} \right)
  \\
  &=
  - \frac{1}{2} \vec{u}^T \mat{M} \mat{D} \vec{u}
  - \frac{1}{2} \vec{u}^T \mat{R}[^T] \mat{B} \mat{R} \vec{u}
  + \frac{1}{2} \vec{u}^T \mat{D}[^T] \mat{M} \vec{u}
  - \vec{u}^T \mat{R}[^T] \mat{B} \left( \vecfnum - \mat{R} \vec{u} \right)
  \\
  &=
  \frac{1}{2} \vec{u}^T \mat{R}[^T] \mat{B} \mat{R} \vec{u}
  - \vec{u}^T \mat{R}[^T] \mat{B} \vecfnum,
\end{aligned}
\end{equation}
where the SBP property \eqref{eq:SBP} has been used in the second line.
Writing the element index as an upper index and suppressing the index $\cdot^{(i)}$
for the $i$-th element, this can be rewritten as
\begin{equation}
\label{eq:const-lin-adv-entropy-target}
\begin{aligned}
  \vec{u}^T \mat{M} \partial_t \vec{u}
  &=
  \frac{1}{2} \vec{u}^T \mat{R}[^T] \mat{B} \mat{R} \vec{u}
  - \vec{u}^T \mat{R}[^T] \mat{B} \vecfnum
  \\
  &=
  \frac{1}{2} \left( u_R^2 - u_L^2 \right)
  - \left(
    u_R \fnum\bigl(u_R, u_L^{(i+1)}\bigr)
    - u_L \fnum\bigl(u_R^{(i-1)}, u_L\bigr)
  \right)
  \\
  &=
  \left(
    \frac{u_R^{(i-1)} + u_L}{2} \fnum\bigl(u_R^{(i-1)}, u_L\bigr)
    - \frac{u_L^2}{2}
  \right)
  - \left(
    \frac{(u_R + u_L^{(i+1)}}{2} \fnum\bigl(u_R, u_L^{(i+1)}\bigr)
    - \frac{u_R^2}{2}
  \right)
  \\
  &=
  - \vec{1}^T \mat{R}^T \mat{B} \vecFnum,
  \quad
  \vecFnum = \left(
    \Fnum\bigl(u_R^{(i-1)}, u_L\bigr), \Fnum\bigl(u_R, u_L^{(i+1)}\bigr)
  \right)^T,
\end{aligned}
\end{equation}
where $\Fnum(u_-, u_+) = (u_- + u_+)/2 \cdot \fnum(u_-,u_+) - (\psi(u_-) + \psi(u_+))/2$
is the entropy flux of \citet{tadmor1987numerical}. The basic idea of
\citet{abgrall2018general} is to enforce \eqref{eq:const-lin-adv-entropy-target}
for any semidiscretisation via the addition of a correction term $\vec{r}$ on the
left hand side of \eqref{eq:const-lin-adv-DG-standard} that is consistent with
zero and does not violate the conservation relation (using $\mat{D} \vec{1} = \vec{0}$)
\begin{equation}
\label{eq:const-lin-adv-conservation}
\begin{aligned}
  \vec{1}^T \mat{M} \partial_t \vec{u}
  &=
  - \vec{1}^T \mat{M} \mat{D} \vec{u}
  - \vec{1}^T \mat{R}[^T] \mat{B} \left( \vecfnum - \mat{R} \vec{u} \right)
  =
  - \vec{1}^T \mat{R}^T \mat{B} \vecfnum.
\end{aligned}
\end{equation}
He proposes a correction term of the form
\begin{equation}
\label{eq:const-lin-adv-correction}
\begin{gathered}
  \vec{r} = \alpha \left(
    \vec{u}
    - \frac{\vec{1}^T \mat{M} \vec{u}}{\vec{1}^T \mat{M} \vec{1}} \vec{1}
  \right),
  \quad
  \alpha = \frac{\mathcal{E}}{\vec{u}^T \mat{M} \vec{u}
                        - \frac{(\vec{1}^T \mat{M} \vec{u})^2}{\vec{1}^T \mat{M} \vec{1}}},
  \\
  \mathcal{E} = \vec{1}^T \mat{R}^T \mat{B} \vecFnum - \vec{w}^T \mat{M} \mat{D} \vec{u}
                  - \vec{u}^T \mat{R}[^T] \mat{B} \left( \vecfnum - \mat{R} \vec{u} \right).
\end{gathered}
\end{equation}
Indeed, the conservation relation \eqref{eq:const-lin-adv-conservation} is left
unchanged, since
\begin{equation}
  \vec{1}^T \mat{M} \vec{r}
  =
  \alpha \left(
    \vec{1}^T \mat{M} \vec{u}
    - \frac{\vec{1}^T \mat{M} \vec{u}}{\vec{1}^T \mat{M} \vec{1}} \vec{1}^T \mat{M} \vec{1}
  \right)
  =
  \vec{0}.
\end{equation}
Moreover, the entropy rate satisfies the desired equation
\eqref{eq:const-lin-adv-entropy-target}, since
\begin{equation}
\begin{aligned}
  \vec{u}^T \mat{M} \partial_t \vec{u}
  &=
  - \vec{u}^T \mat{M} \vec{r}
  - \vec{u}^T \mat{M} \mat{D} \mat{u}
  - \vec{u}^T \mat{R}[^T] \mat{B} \left( \vecfnum - \mat{R} \vec{u} \right)
  =
  \\
  &=
  - \underbrace{\alpha \left(
    \vec{u}^T \mat{M} \vec{u}
    - \frac{\vec{1}^T \mat{M} \vec{u}}{\vec{1}^T \mat{M} \vec{1}} \vec{u}^T \mat{M} \vec{1}
  \right)}_{= \mathcal{E}}
  - \vec{u}^T \mat{M} \mat{D} \mat{u}
  - \vec{u}^T \mat{R}[^T] \mat{B} \left( \vecfnum - \mat{R} \vec{u} \right)
  \\
  &=
  - \vec{1}^T \mat{R}^T \mat{B} \vecFnum.
\end{aligned}
\end{equation}
If the denominator of $\alpha$ in \eqref{eq:const-lin-adv-correction} is zero,
the numerical solution is constant in the element because of the Cauchy Schwarz
inequality (since $\vec{1}$ and $\vec{u}$ are linearly dependent in that case). Then,
the DG scheme reduces to a finite volume scheme using the numerical flux $\fnum$
and is therefore entropy conservative/stable depending on $\fnum$.

In the following, some numerical experiments will be conducted using nodal DG
methods on equidistant nodes including the boundaries and diagonal mass matrices
using the weights of the closed Newton Cotes quadrature formula (which are positive
in the cases considered below). The domain is divided into $N = 16$ elements and
polynomials of degree $\leq p = 3$ are applied. The initial condition is advanced
one time step $\Delta t$ using SSPRK(10,4) of \cite{ketcheson2008highly}.

These methods have been implemented in Julia \texttt{v0.6.4} \citep{bezanson2017julia}
using floating point numbers with extended precision (\texttt{BigFloat} with
\texttt{setprecision(5000)}). Using $500$ different values of $\Delta t$ (with
uniformly distributed logarithms), the discrete energy errors after one time step
$\norm{u_\mathrm{num}(\Delta t)}_M^2 - \norm{u_0}_M^2$ (computed via the mass matrix
$\mat{M}$) are shown in Figure~\ref{fig:entropy_stability_via_Abgrall}. As can be
seen there, the discrete energy increases for every choice of $\Delta t > 0$.
Moreover, the increase of the energy scales as $\O(\Delta t^5)$, as expected for
one time step of a fourth order Runge--Kutta method.

To sum up, a semidiscrete DG scheme with insufficient quadrature strength
is made semidiscretely energy conservative following the approach of
\citet{abgrall2018general}. Integrating the resulting ordinary differential equation
in time with SSPRK(10,4) results in monotonically increasing energies, even for
ridiculously small time steps $\dt > 0$.
\begin{figure}[!ht]
\centering
\captionsetup{aboveskip=-4pt}
  \includegraphics{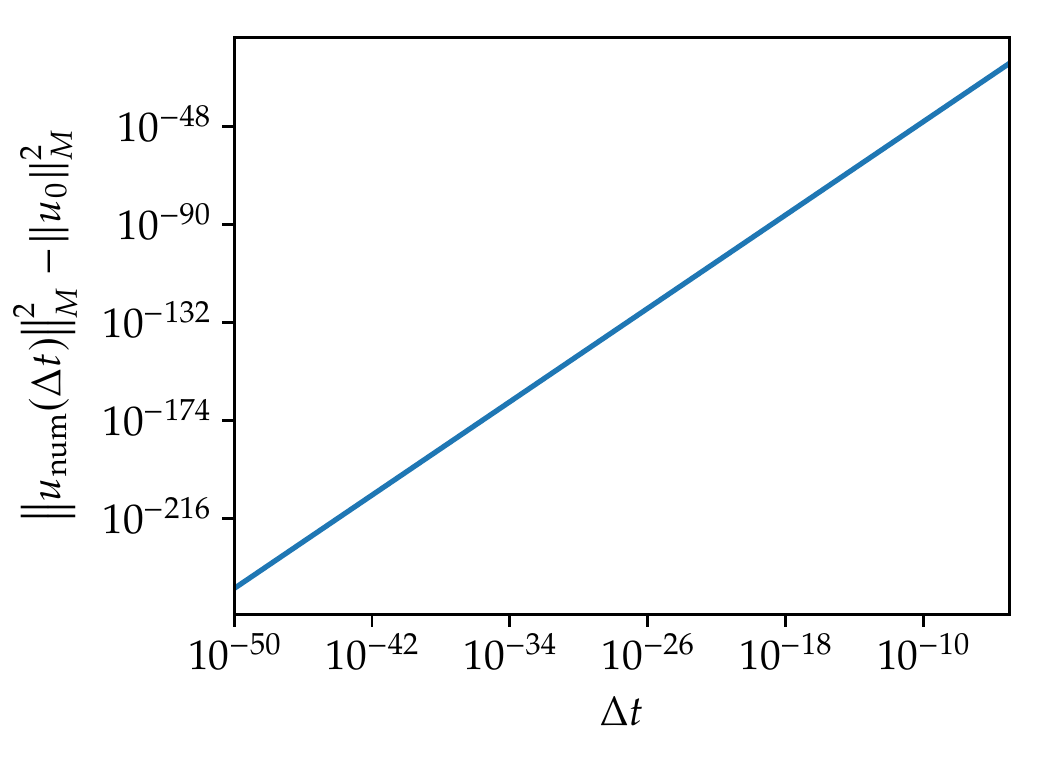}
  \caption{Discrete energy errors after one time step of SSPRK(10,4).}
  \label{fig:entropy_stability_via_Abgrall}
\end{figure}

\section{First Order Schemes}
\label{sec:first-order}

In contrast to the negative results of the previous sections for explicit methods
of at least second order of accuracy, there are first order schemes that are strongly
stable. To prove this, it suffices to consider schemes with two stages, i.e.
\begin{equation}
\label{eq:erk21}
\begin{aligned}
  u_1 &= u_0,
  \\
  u_2 &= u_0 + a_{21} \dt g(u_1),
  \\
  u_+ &= u_0 + b_1 \dt g(u_1) + b_2 \dt g(u_2).
\end{aligned}
\end{equation}
\begin{theorem}
\label{thm:first-order}
  There are first order accurate explicit Runge--Kutta method with two stages
  \eqref{eq:erk21} that are
  \begin{itemize}
    \item
    strong stability preserving

    \item
    and strongly stable for the ODE \eqref{eq:ode} with semibounded and Lipschitz
    continuous $g$ with $\normLip{g} \leq L$ under a time step constraint
    $0 < \dt \leq \dt_\mathrm{max} \propto L^{-1}$.
  \end{itemize}
\end{theorem}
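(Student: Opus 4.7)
The plan is to prove the theorem constructively by exhibiting a specific scheme in the family \eqref{eq:erk21} and verifying both properties directly. I would take $a_{21} = 1$, $b_1 = \frac{1}{3}$, $b_2 = \frac{2}{3}$. First-order accuracy is immediate from $b_1 + b_2 = 1$, and the scheme is strictly first order since $b_2 a_{21} = \frac{2}{3} \neq \frac{1}{2}$. For SSP, the Shu--Osher form
\begin{equation*}
  u_2 = u_0 + \dt \, g(u_0),
  \qquad
  u_+ = \frac{2}{3} u_0 + \frac{1}{3} \bigl( u_2 + 2 \dt \, g(u_2) \bigr),
\end{equation*}
expresses $u_+$ as a convex combination of explicit Euler steps with maximal effective step $2\dt$, so the standard convexity argument yields SSP coefficient $c = \frac{1}{2}$.

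For strong stability, set $k_1 := g(u_0)$ and $k_2 := g(u_2)$. Expanding gives $\norm{u_+}^2 - \norm{u_0}^2 = 2 \dt \scp{u_0}{b_1 k_1 + b_2 k_2} + \dt^2 \norm{b_1 k_1 + b_2 k_2}^2$. Semiboundedness yields $\scp{u_0}{k_1} \leq 0$ and, via $u_0 = u_2 - a_{21} \dt \, k_1$,
\begin{equation*}
  \scp{u_0}{k_2} = \scp{u_2}{k_2} - a_{21} \dt \scp{k_1}{k_2}
  \leq -a_{21} \dt \scp{k_1}{k_2}.
\end{equation*}
Collecting the $\dt^2$ contributions and writing $k_2 = k_1 + \delta$ with $\norm{\delta} \leq L \, a_{21} \dt \norm{k_1}$ from Lipschitz continuity, a short calculation using $b_1 + b_2 = 1$ gives
\begin{equation*}
  \norm{u_+}^2 - \norm{u_0}^2
  \leq
  2 b_1 \dt \scp{u_0}{k_1}
  + \dt^2 \Bigl[ (1 - 2 a_{21} b_2) \norm{k_1}^2
                 + 2 b_2 (1 - a_{21}) \scp{k_1}{\delta}
                 + b_2^2 \norm{\delta}^2 \Bigr].
\end{equation*}
For the chosen coefficients the mixed term vanishes and the bracket collapses to $\bigl( -\tfrac{1}{3} + \tfrac{4}{9} L^2 \dt^2 \bigr) \norm{k_1}^2 \leq 0$ whenever $\dt \leq \sqrt{3}/(2L) =: \dt_\mathrm{max}$. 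Combined with the nonpositive linear term, this gives $\norm{u_+}^2 \leq \norm{u_0}^2$, which is the desired strong stability.

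The structural insight driving the choice of coefficients is that, under consistency $b_1 + b_2 = 1$, the $\O(1)$ coefficient of $\norm{k_1}^2$ in the $\dt^2$ bracket equals $1 - 2 a_{21} b_2$. It is negative precisely when $a_{21} b_2 > \tfrac{1}{2}$, a condition that is forbidden by the second-order order condition $b_2 a_{21} = \tfrac{1}{2}$ (consistent with the negative results of sections~\ref{sec:three-stages}--\ref{sec:ssprk104}) but is an open condition for strictly first-order schemes. The additional choice $a_{21} = 1$ cancels the mixed term $\scp{k_1}{\delta}$ and leaves a clean $\O(L^2 \dt^2)$ remainder that is absorbed for $\dt = \O(L^{-1})$. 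The main obstacle is therefore not the derivation—which is elementary once the coefficients are fixed—but rather identifying this algebraic sweet spot at the intersection of SSP positivity, first-order (but not second-order) consistency, and the sign condition $a_{21} b_2 > \tfrac{1}{2}$.
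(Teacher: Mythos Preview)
Your proof is correct and follows essentially the same approach as the paper: expand $\norm{u_+}^2 - \norm{u_0}^2$, use semiboundedness on $\scp{u_i}{g(u_i)}$ and Lipschitz continuity on $g(u_2)-g(u_0)$, identify the structural condition $1 - 2 a_{21} b_2 < 0$, and verify SSP via a Shu--Osher decomposition. The only difference is the concrete example---the paper takes $a_{21}=\tfrac{3}{2}$, $b_1=b_2=\tfrac{1}{2}$ and bounds the mixed term $\scp{k_1}{\delta}$ via Cauchy--Schwarz, whereas your choice $a_{21}=1$ makes that term vanish and yields a slightly cleaner final estimate.
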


\begin{proof}
  Inserting \eqref{eq:erk21} and using $u_1 = u_0$ yields
  \begin{equation}
  \begin{aligned}
    \norm{u_+}^2 - \norm{u_0}^2
    &=
    2 \dt \scp{u_0}{ b_1 g(u_1) + b_2 g(u_2)}
    \\&\quad
    + \dt^2 \Bigl(
      b_1^2 \norm{g(u_1)}^2 + 2 b_1 b_2 \scp{g(u_1)}{g(u_2)} + b_2^2 \norm{g(u_2)}^2
    \Bigr)
    \\
    &=
    2 b_1 \dt \scp{u_0}{g(u_0)} + 2 b_2 \dt \scp{u_2 - a_{21} \dt g(u_0)}{g(u_2)}
    \\&\quad
    + \dt^2 \Bigl(
      b_1^2 \norm{g(u_0)}^2 + 2 b_1 b_2 \scp{g(u_0)}{g(u_2)} + b_2^2 \norm{g(u_2)}^2
    \Bigr)
    \\
    &=
    2 b_1 \dt \scp{u_0}{g(u_0)} + 2 b_2 \dt \scp{u_2}{g(u_2)}
    \\&\quad
    + \dt^2 \Bigl(
      b_1^2 \norm{g(u_0)}^2 + 2 (b_1 b_2 - b_2 a_{21}) \scp{g(u_0)}{g(u_2)} + b_2^2 \norm{g(u_2)}^2
    \Bigr).
  \end{aligned}
  \end{equation}
  Since $g$ is semibounded, the inner products can be estimated as
  $\scp{u_i}{g(u_i)} \leq 0$. Thus, the terms proportional to $\dt$ can be estimated
  if $b_1,b_2 \geq 0$. Since the conditions for first order are $b_1 + b_2 = 1$,
  this becomes
  \begin{equation}
  \label{eq:erk21-b-geq-0}
    0 \leq b_2 \leq 1,
    \quad
    b_1 = 1 - b_2.
  \end{equation}

  The terms multiplied by $\dt^2$ satisfy
  \begin{equation}
  \begin{aligned}
    &\quad
    b_1^2 \norm{g(u_0)}^2
    + 2 (b_1 b_2 - b_2 a_{21}) \scp{g(u_0)}{g(u_2)}
    + b_2^2 \norm{g(u_2)}^2
    \\
    &=
    b_1^2 \norm{g(u_0)}^2
    + 2 (b_1 b_2 - b_2 a_{21}) \scp{g(u_0)}{g(u_0) + g(u_2) - g(u_0)}
    + b_2^2 \norm{g(u_0) + g(u_2) - g(u_0)}^2
    \\
    &=
    \bigl( b_1^2 + 2 (b_1 b_2 - b_2 a_{21}) + b_2^2 \bigr) \norm{g(u_0)}^2
    \\&\quad
    + 2 (b_1 b_2 - b_2 a_{21} + b_2^2) \scp{g(u_0)}{g(u_2) - g(u_0)}
    + b_2^2 \norm{g(u_2) - g(u_0)}^2
    \\
    &\leq
    \bigl( (b_1 + b_2)^2 - 2 b_2 a_{21} \bigr) \norm{g(u_0)}^2
    \\&\quad
    + 2 \abs{b_1 b_2 - b_2 a_{21} + b_2^2} \, \norm{g(u_0)} \, \norm{g(u_2) - g(u_0)}
    + b_2^2 \norm{g(u_2) - g(u_0)}^2
    \\
    &\leq
    \bigl( (b_1 + b_2)^2 - 2 b_2 a_{21} \bigr) \norm{g(u_0)}^2
    \\&\quad
    + 2 \abs{b_1 b_2 - b_2 a_{21} + b_2^2} \abs{a_{21}} L \dt \norm{g(u_0)}^2
    + b_2^2 a_{21}^2 L^2 \dt^2 \norm{g(u_0)}^2.
  \end{aligned}
  \end{equation}
  Inserting the order condition \eqref{eq:erk21-b-geq-0}, the last expression
  can be written as
  \begin{equation}
    \dots
    \leq
    (1 - 2 b_2 a_{21}) \norm{g(u_0)}^2
    + 2 \abs{1 - a_{21}} \, \abs{b_2 a_{21}} L \dt \norm{g(u_0)}^2
    + \abs{b_2 a_{21}}^2 L^2 \dt^2 \norm{g(u_0)}^2.
  \end{equation}
  If $g(u_0) = 0$, then $u_+ = u_2 = u_1 = u_0$ and strong stability is obvious.
  Otherwise, the term without $\dt$ is negative if
  \begin{equation}
  \label{eq:erk21-condition-strong-stability}
    1 - 2 b_2 a_{21} < 0.
  \end{equation}
  In that case, strong stability is achieved for sufficiently small $\dt$
  (and the natural assumption $b_2 a_{21} L \neq 0$), since
  \begin{equation}
    (1 - 2 b_2 a_{21})
    + 2 \abs{1 - a_{21}} \, \abs{b_2 a_{21}} L \dt
    + \abs{b_2 a_{21}}^2 L^2 \dt^2
    \leq
    0
  \end{equation}
  for
  \begin{equation}
  \label{eq:erk21-strong-stability-CFL}
    \dt
    \leq
    \frac{\sqrt{ (1 - a_{21})^2 - (1 - 2 b_2 a_{21}) } - \abs{1 - a_{21}}}{\abs{b_2 a_{21}} L}.
  \end{equation}
  Thus, there are strongly stable schemes.

  Choosing for example
  \begin{equation}
    b_1 = b_2 = \frac{1}{2},
    \quad
    a_{21} = \frac{3}{2},
  \end{equation}
  the new value $u_+$ can be written as
  \begin{equation}
  \begin{aligned}
    u_+
    &=
    u_0 + \frac{1}{2} \dt g(u_0) + \frac{1}{2} \dt g(u_2)
    \\
    &=
    \frac{3}{4} u_0 + \frac{1}{2} \dt g(u_0)
    + \frac{1}{4} \left( u_2 - \frac{3}{2} \dt g(u_0)  \right) + \frac{1}{2} \dt g(u_2)
    \\
    &=
    \frac{3}{4} \left( u_0 + \frac{1}{6} \dt g(u_0) \right)
    + \frac{1}{4} \left( u_2 + 2 \dt g(u_2) \right).
  \end{aligned}
  \end{equation}
  Since $u_2 = u_0 + \frac{3}{2} \dt g(u_0)$, $u_+$ is a convex combination of
  explicit Euler steps with positive step sizes, the resulting scheme is strong
  stability preserving.
\end{proof}

\begin{remark}
  Of course, the scheme constructed in the proof of Theorem~\ref{thm:first-order}
  and the derived lower bound on the SSP coefficient are not optimal. However,
  since such first order schemes are not really relevant in practice, no attempt
  to optimise them has been made.
\end{remark}

\begin{remark}
  Generalising the approach used in proof of Theorem~\ref{thm:first-order}, it
  can be expected that there are strongly stable schemes of first order if more
  stages are used.
\end{remark}

\begin{remark}
  If a non-autonomous problem \eqref{eq:ode-f} is considered, the proof of
  Theorem~\ref{thm:first-order} requires Lipschitz continuity in $(t,u)$ instead
  of continuity in $t$ and Lipschitz continuity in $u$ as required for the
  Picard-Lindelöf theorem, since $f(t_2, u_2) - f(t_0, u_0)$ has to be estimated.
\end{remark}

\begin{remark}
  Since higher-order schemes satisfy $\norm{u_+}^2 - \norm{u_0}^2 = \O(\dt^p)$
  for $p > 2$ if $g$ is smooth, it does not seem to be (easily) possible to get
  similar estimate for higher order schemes using only Lipschitz continuity of
  $g$.
\end{remark}

\section{Summary and Discussion}
\label{sec:summary}

In this article, strong stability of explicit SSP Runge--Kutta methods has been
investigated. Many well-known and widespread high order schemes are not strongly
stable for ODEs with general nonlinear, smooth, and semibounded operators with
bounded Lipschitz constant, cf. Theorems~\ref{thm:ERK-SSP-3-2}, \ref{thm:ssprks2},
\ref{thm:ssprkn23}, and~\ref{thm:ssprk104}.
Moreover, it has been proven that the norms of the numerical solutions can even
increase monotonically and without bounds for the popular three stage, third order
method SSPRK(3,3) of \cite{shu1988efficient}, cf. Theorem~\ref{thm:ssprk33}.
Additionally, it has been shown in section~\ref{sec:ssprk104} that the ten stage,
fourth order method SSPRK(10,4) of \cite{ketcheson2008highly} can result in
increasing norms of the solution for an energy stable and nonlinear
semidiscretisation of a hyperbolic conservation law. Finally, it has been proven
that such restrictions do not apply to first order Runge--Kutta methods, cf.
Theorem~\ref{thm:first-order}. In particular, there are strongly stable SSP methods,
even for nonlinear and semibounded operators that are Lipschitz continuous. In
that case, strong stability can be guaranteed under a time step restriction
$\dt \leq \dt_\mathrm{max}$, where $\dt_\mathrm{max}$ is proportional to the
inverse of the Lipschitz constant of the right hand side.

It is well-known that implicit Runge--Kutta methods can have more favourable stability
properties than explicit ones. In particular, there are strongly stable methods
for general semibounded operators \citep[sections~357--359]{butcher2016numerical}.
Furthermore, summation by parts operators can be used to construct schemes with these
properties \citep{nordstrom2013summation, lundquist2014sbp, boom2015high}, resulting
e.g. in energy stable schemes for nonlinear equations \citep{nordstrom2018energy}.
This is also related to space-time discontinuous Galerkin schemes, where entropy
stability can be obtained \citep{friedrich2018entropy}.

In this light, it seems interesting to investigate whether there are general
possibilities to obtain strong stability of explicit Runge--Kutta methods by
approximating the original problem, e.g. by adding sufficient artificial dissipation,
cf. \cite{zakerzadeh2016high}. In the light of the current results, it might be
conjectured that such a dissipative mechanism might be necessary to obtain strong
stability with explicit methods. If this is possible, it is interesting to compare
such schemes with fully implicit ones.

\appendix

\section*{Acknowledgements}

The author was supported by the German Research Foundation (DFG, Deutsche
Forschungsgemeinschaft) under Grant SO~363/14-1. The author would like to thank
David Ketcheson very much for some comments on an earlier draft of this manuscript
and for pointing out the references \cite{dahlquist1979generalized, dekker1984stability}.

\printbibliography

\end{document}